\newcommand{\rr}{{\mathbb R}}
\newcommand{\Sp}{{\mathbb S}}
\newcommand{\eps}{\varepsilon}
\newcommand{\set}[1]{\left\{#1\right\}}
\newcommand{\pare}[1]{\left(#1\right)}
\newcommand{\abs}[1]{\left|#1\right|}
\newcommand{\norm}[1]{\abs{\abs{#1}}}
\newcommand{\dive}[2]{\mbox{div}_{_{#1}}\pare{#2}}
\newcommand{\W}{{\mathcal W}}
\newcommand{\vv}{{\mathbf v}}
\newcommand{\ww}{{\mathbf w}}
\newcommand{\uu}{{\mathbf u}}
\newcommand{\FF}{{\mathbf F}}
\newcommand{\dist}[1]{\mbox{dist}\left({#1}\right)}
\newcommand{\pI}[1]{\left <{#1}\right >}
\newcommand\restr[2]{{
  \left.\kern-\nulldelimiterspace 
  #1 
  \vphantom{\big|} 
  \right|_{#2} 
  }}
 \newtheorem{theorem}{Theorem}[section]
\newtheorem{proposition}[theorem]{Proposition}
\newtheorem{definition}[theorem]{Definition}
\newtheorem{remark}[theorem]{Remark}
\newtheorem{claim}[theorem]{Claim}
\begin{document}

\title[Non-existence Theorem for $Q_{n-1}$-translators]{An example of rotationally symmetric  $Q_{n-1}$-translators and a non-existence theorem in $\rr^{n+1}$}
\author[J. Torres Santaella]{Jos\'e  Torres Santaella}
\thanks{The author was partially supported by the projects  $P20_01391$ Junta de Andalucía (I+D+i, PAIDI 2020) and MTM2016-78807-C2-1-P (MINECO-FEDER)} 
\address{Departamento de Matemáticas\\ Pontificia Universidad Católica de Chile\\ Santiago\\ Chile}
\email{jgtorre1@uc.cl}

\maketitle

\begin{abstract}
The main result in this paper is a non-existence Theorem of entire $Q_{n-1}$-translators in $\rr^{n+1}$. In addition, an example of non-entire complete $Q_{n-1}$-translator has been found and a Tangential Principle for $Q_k$-translators in  $\rr^{n+1}$.
\end{abstract}

\section{Introduction}

Translating solitons, or translators for short, in Geometrical Analysis, are self-similar solutions of curvature flows which evolve by translation over a fixed direction. These types of solutions have been extensively studied in recent decades due to their connection with singularity models in curvature flows, minimal surfaces, and physical phenomena \cite{antonio martinez_2019}. For a general review of solitons in different areas of Mathematics we refer the reader to \cite{jlauret}. 
\newline
\newline
In the case of extrinsic geometric flows, the most studied translators are those which appear in the Mean Curvature Flow.  They have been studied in \cite{ilmanen_1994} and \cite{huisken-sinestrari_1999}, as type $2$-singularities models of the Mean Curvature Flow when the initial data is mean convex, and as minimal hypersurface in $\left(\rr^{n+1},e^{\pI{p,v}}\pI{\cdot,\cdot}\right)$, respectively. 
In consequence, Mean Curvature translators can be taken as a bridge between minimal surfaces and singularity models of the Mean Curvature Flow. 
\newline
\newline
In the spirit of expanding the analysis of Mean Curvature translators to other types of extrinsic geometric flows, we consider translators of the $Q_k$-flow. 
\newline
Being more precise, a closed manifold $M^n$ evolves under the $Q_k$-flow in $\rr^{n+1}$ if, for a given immersion $F_0:M\to\rr^{n+1}$, there exist a $1$-parameter family of immersions $F:M\times[0,T]\to\rr^{n+1}$ such that
\begin{align}\label{Q_k-flow}
\begin{cases}
 \left(\dfrac{\partial F}{\partial t}\right)^{\perp}&=Q_k(\lambda),\mbox{ on }M\times(0,T),
 \\
 F(\cdot,0)&=F_0(\cdot),
 \end{cases}
 \end{align}
where $(\cdot)^{\perp}$ denotes the orthogonal projection onto the normal bundle of $TM_t$ in $T\rr^{n+1}$, $M_t=F(M,t)$, $Q_k(\lambda)=\dfrac{S_{k+1}(\lambda)}{S_k(\lambda)}$ and $S_l(\lambda)$ denotes the symmetric elemental polynomial in $n$-variables of order $l$ evaluated at the principal curvatures of $M_t$.  This flow has been studied by many authors and we refer the reader to \cite{andrews_2004}, \cite{dieter_2005}, \cite{Choi-Daskalopoulos_2016} and \cite{Yo} for some related work on this curvature flow. 
\newline
In this context, a $Q_k$-translator is a solution $F(x,t)$ to \eqref{Q_k-flow} of the form 
\begin{align}\label{ine}
F(x,t)=F_0(x)+tv,
\end{align}
where $v\in\Sp^n$ is the direction of translation. Moreover, a $Q_k$-translator can be seen as a hypersurface which satisfies the equation
\begin{align}\label{Q_k-trans}
Q_k(\lambda)=\pI{\nu,v},	
\end{align}	
where $\nu$ and $\lambda$ are  the normal unit vector and the principal curvature vector of $M_0=F_0(M)$ in $\rr^{n+1}$, respectively. 
\newline
From the PDE perspective, equations \eqref{Q_k-flow} and \eqref{Q_k-trans} are fully nonlinear for $k>0$ and quasilinear for the case $k=0$. Additionally, it is a well known result that the function $Q_k$ is 1-homogeneous, strictly increasing in each coordinate, and concave when the principal curvatures of $M_0$ belong to the cone 
\begin{align}\label{cone}
\Gamma_{k+1}:=\set{\lambda\in\rr^n:S_l(\lambda)>0,\mbox{ for }l=0,\ldots,k+1}.
\end{align}
We refer to the reader to \cite{caffarelli_nirenberg_spruck_1988} and \cite{andrews_2004} for a proof of these facts. 
\newline

In this paper, we focus on similitudes and differences of $Q_k$-translators for cases $k>0$ and $k=0$. 
\newline
An important similarity is that there are non closed $Q_k$-translators in $\rr^{n+1}$ such that their principal curvatures are in $\Gamma_{k+1}$. 
Indeed, if $M_0$ is a  $Q_k$-translator of this kind, then $M_0$ is a graph over a hyperplane orthogonal to $v$.
\newline
Let $u=\pI{F_0,v}$ be the height function of $M_0$. Then, after choosing normal coordinates at $p\in M$, we have the following equations
\begin{align}\label{i1}
	\nabla_i u=\pI{e_i,v}\mbox{ and }\nabla_j\nabla_iu=h_{ij}\pI{\nu,v}.
\end{align}
Recall that in normal coordinates we have $(\nabla_je_i)^{\top}=0$ and $h^i_j=h_{ij}=\lambda_i\delta^i_j$ at $p$, where $\lambda_i$ denote the principal curvatures of $M_0$ and $\delta_j^i$ is the Kronecker's delta function. Consequently, by multiplying Equation \eqref{i1} by $\frac{\partial Q_k}{\partial h_{ij}}$, we obtain
\begin{align*}
	\square_ku =Q_k\pI{\nu,v}=\pI{\nu,v}^2=1-\abs{v^{\top}}^2=1-|\nabla u|^2,
\end{align*}
where $\square_kf=\dfrac{\partial Q_k}{\partial h^i_j}\nabla^i\nabla_jf$. Here we are using the Einstein summation convention over the indexes. 
\newline
 Finally, since $M_0$ is closed, $u$ reaches an interior maximum. However, the Maximum Principle implies that $u$ is constant. Therefore, $M_0$ is a piece of a hyperplane which contradicts the compactness of $M_0$.
\newline

In addition, we do not expect a correspondence between $Q_k$-translators and minimal hypersurfaces in weighted Euclidean spaces for $k>0$. For instance, if we consider the functional given by, 
\begin{align*}
A_k(M)=\int_MS_k(\lambda)e^{(k+1)\pI{p,v}}dA,
\end{align*}
where $M$ is a closed hypersurface in $\rr^{n+1}$. Then, the first variation of $A_k$ is given by the formula
\begin{align}\label{I2}
			\restr{\dfrac{d}{dt}A_k(M)}{t=0}=\int_M\left(f(k+1)\left(S_{k}\pI{\nu,v}-S_{k+1}\right)+\dive{M}{T_{k-1}\nabla f}\right)e^{(k+1)\pI{p,v}}dA,
		\end{align}		
for $f\in\mathcal{C}^{\infty}(M)$. A proof of Equation \eqref{I2} can be easily derived from the variations of the functionals calculated in \cite{reilly_1973}. Here, the operator $T_k=T_k(\W)$ is the $k$th-Newton transformation of the the Weingarten map of $M$, that is
		\begin{align*}
			T_k&=S_{k}I-S_{k-1}\W+\ldots+(-1)^{k-1}H\W^{k-1}+(-1)^{k}\W^k
			\\
			&=S_{k}I-\W T_{k-1}. 
		\end{align*}
We note that the term $\dive{M}{T_{k-1}\nabla f}$ only vanishes when $k=0$. In particular, this implies that $Q_k$-translators are critical points of $A_k$ only for $k=0$. 
\newline

The main difference covered in this paper is that entire $Q_{n-1}$-translators do not exist in $\rr^{n+1}$, in contrast with the bowl soliton of the Mean Curvature case  found  in \cite{haslhofer_2015}. 
\newline
We prove this result by first showing the existence of a complete strictly convex non-entire $Q_{n-1}$-translator for $n\geq 2$, and we conclude with a comparison argument. 
To be more precise, we find an explicit solution to Equation \eqref{Q_k-trans}  for case  $n=2$ (see Figure \ref{fig}) and generalize it to higher dimensions. Our result reads as follows.
\begin{theorem}\label{T1}
	For each $n> 2$  there exists a complete, strictly convex $Q_{n-1}$-translator in $\rr^{n+1}$ of the form
	\begin{align*}
		\set{(x,u(|x|))\in\rr^{n+1}:\:0\leq |x|<\frac{1}{n}},
	\end{align*}	
such that $u\to\infty$ when $|x|\to n^{-1}$.
\end{theorem}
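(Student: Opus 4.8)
The plan is to use rotational symmetry to reduce the translator equation \eqref{Q_k-trans} to a single ODE for the profile $u$, integrate it after passing to a convenient variable, and read off completeness, strict convexity and the blow-up of $u$ from the solution. \emph{Reduction.} Take $v=e_{n+1}$ as the direction of translation and write $r=|x|$. For $\set{(x,u(r))}$ with the upward unit normal, $\pI{\nu,v}=(1+u'^2)^{-1/2}$, while the principal curvatures are the radial one $\kappa_1=u''(1+u'^2)^{-3/2}$ and the rotational one $\kappa_2=u'\,r^{-1}(1+u'^2)^{-1/2}$, the latter with multiplicity $n-1$. Since $Q_{n-1}=S_n/S_{n-1}$ and $S_{n-1}/S_n=\sum_i\kappa_i^{-1}$, equation \eqref{Q_k-trans} is equivalent to $\kappa_1^{-1}+(n-1)\kappa_2^{-1}=\pI{\nu,v}^{-1}$; inserting the formulas above and dividing by $(1+u'^2)^{1/2}$ gives
\[
\frac{1+u'^2}{u''}+\frac{(n-1)\,r}{u'}=1 .
\]
A rotationally symmetric solution smooth at the origin has $u'(0)=0$, and then this identity forces $u''(0)=n$.

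\emph{Integration.} The translator we want is strictly convex, so $u'$ is strictly increasing; introduce $\theta\in[0,\tfrac\pi2)$ by $u'(r)=\tan\theta$ (equivalently $\pI{\nu,v}=\cos\theta$) and regard $r=r(\theta)$. Using $u''=\sec^2\theta\,(d\theta/dr)$, the ODE collapses to the \emph{linear} equation
\[
\frac{dr}{d\theta}+(n-1)\cot\theta\,r=1 ,
\]
whose integrating factor is $\sin^{n-1}\theta$; imposing $r(0)=0$ (equivalently $u'(0)=0$) gives the unique solution
\[
r(\theta)=\frac{1}{\sin^{n-1}\theta}\int_0^\theta\sin^{n-1}\phi\,d\phi .
\]
For $n=2$ this is $r=\tan(\theta/2)$, hence $u'(r)=2r/(1-r^2)$ and $u(r)=-\log(1-r^2)$, the explicit translator of Figure \ref{fig}.

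\emph{Geometry for $n>2$.} Put $\psi(\theta):=1-(n-1)r(\theta)\cot\theta$, so $dr/d\theta=\psi$; a short computation gives $\psi'(\theta)=(n-1)r(\theta)\csc^2\theta>0$ at any zero of $\psi$, while $\psi(\theta)\to\tfrac1n>0$ as $\theta\to0^+$, so $\psi>0$ on $(0,\tfrac\pi2)$. Therefore $r$ increases strictly from $r(0)=0$ to a finite limiting radius $R$, and $u''=\sec^2\theta/\psi>0$; inverting $\theta\mapsto r$ gives a smooth, strictly convex profile $u$ on $[0,R)$ with $u'(0)=0$, $u'>0$ and $u''>0$ for $r>0$, and $u'(r)\to\infty$ as $r\to R$. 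Near $\theta=\tfrac\pi2$ one has $r(\theta)=R-(\tfrac\pi2-\theta)+O((\tfrac\pi2-\theta)^2)$, whence $u'(r)\sim(R-r)^{-1}$ and $u(r)=\int_0^r u'(s)\,ds\to\infty$ as $r\to R$; in particular the radial length $\int_0^R\sqrt{1+u'(r)^2}\,dr\ge\int_0^R u'(r)\,dr=\infty$, so the graph is a complete hypersurface with a single end over the ball of radius $R$. As $\kappa_1,\kappa_2>0$ for $r>0$ and $\kappa_1(0)=\kappa_2(0)=n$, all principal curvatures are positive, so the hypersurface is strictly convex and $\lambda\in\Gamma_n$, the admissible cone for $Q_{n-1}$. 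This produces the asserted translator.

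The step I expect to be the main obstacle is the last one: proving that $\psi$ stays positive (equivalently that $r(\theta)$ is globally monotone), so that $u'$ is well defined on the whole ball; pinning down the value of the boundary radius $R$; and making the blow-up rate of $u'$ near $R$ precise enough to conclude simultaneously that $u\to\infty$ and that the induced metric is geodesically complete. By contrast, the reduction and the integration are routine once one uses $\theta$ as the independent variable.
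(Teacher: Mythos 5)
Your proposal is correct, and it takes a genuinely different — and in fact more powerful — route than the paper. The paper proves local existence of the profile $\vv=\dot\uu$ near $r=0$ by a fixed-point argument in a weighted ball, and then extends and localizes the blow-up via the barriers $\ww_1(r)=nr/\sqrt{1-n^2r^2}$ and $\ww_2(r)=-\ln(1-nr)$ of Proposition \ref{Barriers}. You instead observe that, since $Q_{n-1}=S_n/S_{n-1}$ is the harmonic-mean-type curvature and $S_{n-1}/S_n=\sum_i\kappa_i^{-1}$, the translator equation reduces to $\kappa_1^{-1}+(n-1)\kappa_2^{-1}=\pI{\nu,v}^{-1}$; writing $u'=\tan\theta$ then turns the profile ODE into a \emph{linear} first-order equation for $r(\theta)$ with integrating factor $\sin^{n-1}\theta$, giving the closed-form solution
\begin{align*}
r(\theta)=\dfrac{1}{\sin^{n-1}\theta}\int_0^{\theta}\sin^{n-1}\phi\,d\phi .
\end{align*}
This not only proves existence directly and uniformly in $n$ (the Banach fixed point is unnecessary), it also gives the monotonicity of $r(\theta)$, the strict convexity, the completeness and the blow-up of $u$ by elementary inspection.

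You flag ``pinning down the value of the boundary radius $R$'' as the expected obstacle, but your formula already resolves it: $R=r(\pi/2)=\int_0^{\pi/2}\sin^{n-1}\phi\,d\phi$, a Wallis integral. For $n=2$ this gives $R=1$, which matches the paper's explicit example $u=-\ln(1-r^2)$; for $n=3$ it gives $R=\pi/4$, for $n=4$ it gives $R=2/3$, and in general $R>1/n$ with $R\sim\sqrt{\pi/(2n)}$ as $n\to\infty$. In other words, your computation shows that the radius $1/n$ asserted in Theorem \ref{T1} (and in Remark \ref{4.4}) is incorrect. The source of the error in the paper is the sub-solution claim in Remark \ref{4.4}: $\ww_2=-\ln(1-nr)$ has $\dot\ww_2(0)=n>0=\dot\uu(0)$, so the comparison cannot be started at $r=0$, and indeed for $n=2$ one checks directly that $\dot\uu=2r/(1-r^2)<n/(1-nr)=\dot\ww_2$ near $r=1/2$, contradicting the claimed sub-solution property. (The super-solution $\ww_1$ and the resulting existence on $[0,1/n)$ are fine; what fails is the assertion that the singularity occurs exactly at $r=1/n$.) This discrepancy does not affect the qualitative conclusions of Theorems \ref{T1} and \ref{T3} — the translator is still defined over a bounded ball, strictly convex, complete, and blows up at the boundary — but the value of the radius in the statement should be $R=\int_0^{\pi/2}\sin^{n-1}\phi\,d\phi$, not $1/n$. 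Your explicit solution would be a clean way to correct and simplify the paper's Section \ref{sec3}.
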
 

\begin{figure} \label{fig}
	\centering
	\includegraphics[width=0.33\columnwidth]{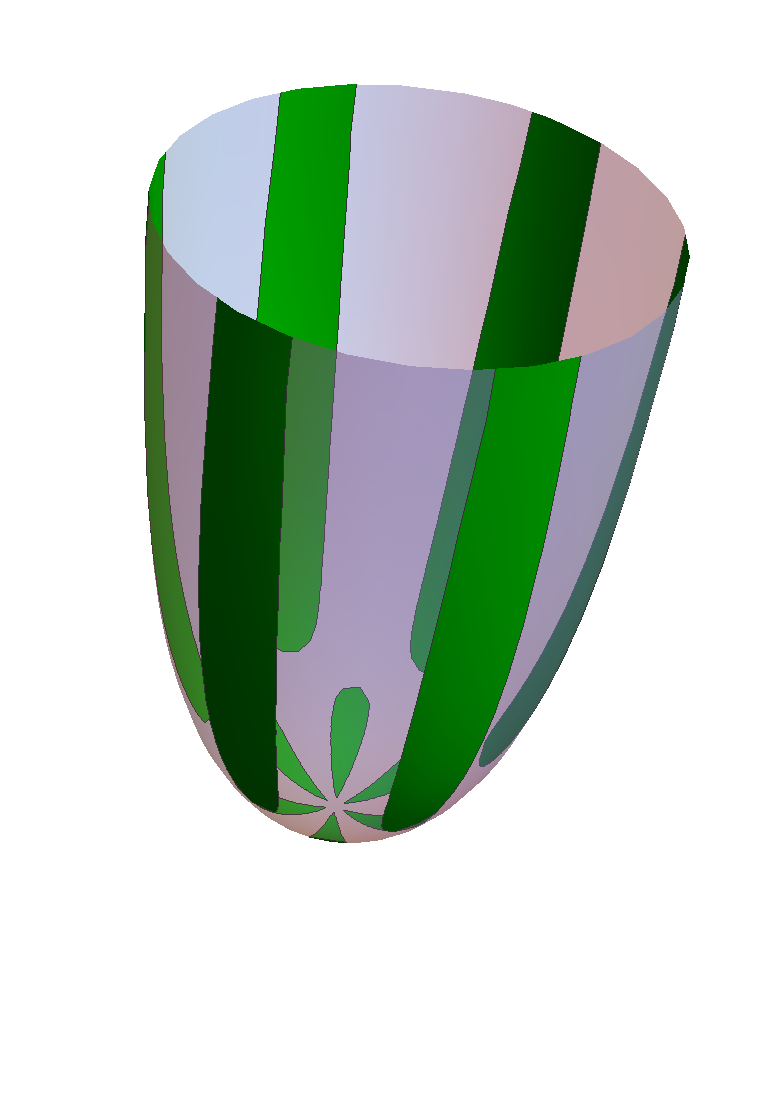}
	\vskip-10mm
	\caption{ $Q_1$-translator $z=-\ln(1-x^2-y^2)$ in $\rr^3$.}
\end{figure}
Then, we derive an interior and a boundary tangential principle for $Q_k$-translators.
\begin{theorem}\label{T2}
	Let $k>0$ and $M_1,M_2\subset\rr^{n+1}$ be two embedded connected $Q_k$-translators in the $x_{n+1}$-direction such that
	\begin{enumerate}
		\item $M_1$ is strictly convex.
		\item The principal curvatures of $M_2$ are in the cone $\Gamma_{k+1}\cap\set{\lambda\geq 0}$.
\end{enumerate}
Then,
\begin{enumerate}
	\item[(a)] (\textbf{Interior principle}) Assume that there exists an interior point $p\in M_1\cap M_2$ such that the tangent spaces coincide at $p$. If $M_1$ lies at one side of $M_2$, then both hypersurfaces coincide.
	
	\item[(b)] (\textbf{Boundary principle}) Assume that the boundaries $\partial M_i$ lie in the same hyperplane $\Pi$ and the intersection of $M_i$ with $\Pi$ is transversal. If $M_1$ lies at one side of $M_2$ and there exist $p\in \partial M_1\cap \partial M_2$ such that the tangent spaces coincide, then both hypersurfaces coincide. 
\end{enumerate}
\end{theorem}
Finally, by combining the above theorems, we prove by standard arguments the main results.   
\begin{theorem}\label{T3}
	There are no entire $Q_{n-1}$-translators in $\rr^{n+1}$ such that their principal curvatures belong to $\Gamma_{n}$. 
\end{theorem}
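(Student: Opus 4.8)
The plan is to run the comparison argument announced in the introduction: use the non-entire translator of Theorem \ref{T1} as a sliding barrier and the tangential principle of Theorem \ref{T2} as the rigidity input. Throughout we take $n\geq 2$, so that $k:=n-1>0$; recall also that $\Gamma_n=\Gamma_{k+1}$ is precisely the open positive orthant $\set{\lambda_i>0\text{ for all }i}$, hence $\Gamma_n\subset\set{\lambda\geq 0}$. Suppose, for contradiction, that $M\subset\rr^{n+1}$ is an entire $Q_{n-1}$-translator with principal curvatures in $\Gamma_n$; after a rotation we may assume the translation direction is $v=e_{n+1}$ (the $x_{n+1}$-direction). Being entire, $M$ is the graph of a function $w\in\mathcal{C}^\infty(\rr^n)$ solving \eqref{Q_k-trans} with $k=n-1$, and it is strictly convex.

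Next, I would invoke Theorem \ref{T1} — together with the explicit solution $z=-\ln(1-\abs{x}^2)$ for $n=2$ displayed above — to obtain a complete, strictly convex $Q_{n-1}$-translator $\Sigma$ that is the graph of a function $u$ over a bounded disk $D=\set{\abs{x}<r}$, with $u\to\infty$ as $\abs{x}\to r$ and translation direction along the same axis (after a reflection if needed). Since horizontal translations of $\rr^{n+1}$ preserve all of these properties, center $D$ at the origin. The utility of $\Sigma$ is that it escapes to infinity over a bounded base, so it can be slid as a barrier against the entire graph $M$.

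For the sliding step, for $s\in\rr$ set $\Sigma_s:=\Sigma+s\,e_{n+1}$, the graph of $u+s$ over $D$; each $\Sigma_s$ is again a complete, strictly convex $Q_{n-1}$-translator in the $x_{n+1}$-direction, since the translator equation is invariant under ambient translations. Consider $\psi:=u-w$ on $D$: since $w$ is bounded above on the compact set $\overline{D}$ while $u\to\infty$ at $\partial D$, we get $\psi\to+\infty$ at $\partial D$, so $\psi$ attains its minimum at some interior point $x_0\in D$. Put $s^\ast:=w(x_0)-u(x_0)$, so that $u+s^\ast\geq w$ on $D$ with equality at $x_0$. Then $p:=(x_0,w(x_0))$ lies on both $M$ and $\Sigma_{s^\ast}$, it is an interior point of each (neither has boundary and $x_0$ lies in the open disk), the function $(u+s^\ast)-w$ has an interior minimum value $0$ at $x_0$ so that $\nabla u(x_0)=\nabla w(x_0)$ and the tangent hyperplanes of $M$ and $\Sigma_{s^\ast}$ at $p$ coincide, and near $p$ the hypersurface $\Sigma_{s^\ast}$ lies on one side of $M$.

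Finally, I would apply Theorem \ref{T2} with $k=n-1$, $M_1=\Sigma_{s^\ast}$ (strictly convex, as a vertical translate of $\Sigma$) and $M_2=M$ (principal curvatures in $\Gamma_n=\Gamma_{k+1}\subset\set{\lambda\geq 0}$). All its hypotheses hold, so it forces $M_1=M_2$, i.e. $\Sigma_{s^\ast}=M$ — which is absurd, since $\Sigma_{s^\ast}$ is a graph over the bounded disk $D$ whose height blows up at $\partial D$ and is therefore not entire, while $M$ is entire. This contradiction proves the theorem. I expect the only real care is in checking that the hypotheses of Theorem \ref{T2} hold literally at $p$ (interior contact point, one-sidedness and tangency in the required sense, and the cone condition with $k=n-1$) and in recording that $\Gamma_n$ is the positive orthant; the geometric content — lower the barrier $\Sigma$ onto $M$ until first contact — is short once that bookkeeping is in place.
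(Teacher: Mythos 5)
Your proof is correct and follows essentially the same route as the paper: use the non-entire translator of Theorem \ref{T1} as a sliding barrier against the putative entire graph, and invoke Theorem \ref{T2} at first contact to force a contradiction. The only difference is that you make the sliding step explicit by minimizing $u-w$ over the bounded disk and using that $u\to\infty$ at $\partial D$ (the paper simply says ``translate downward until it touches for the first time''), which is the same geometric idea written more carefully.
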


Moreover, by the Methods of Moving Planes of Alexandrov, we show that the solution found in Theorem \ref{T1} is unique among any other strictly convex $Q_{n-1}$-translator asymptotic to the cylinder $\Sp^{n-1}(r_n)\times\rr$.   
\begin{theorem}\label{T4}
	The only $Q_{n-1}$-translators in $\rr^{n+1}$ such that their principal curvatures belong to $\Gamma_{n}$ and is asymptotic to the cylinder $\Sp^{n-1}(r_n)\times\rr$ is the  one found in Theorem \ref{T1}. Here $r_n=1$ if $n=2$ and $n^{-1}$ if $n>2$.  
\end{theorem}
The structure of the paper is summarized as follows: In Section \ref{sec2} we state the equations for rotationally symmetric $Q_k$-translators that we use along this paper. In Section \ref{sec3} we show an explicit $Q_{1}$-translator in $\rr^3$ and  prove Theorem \ref{T1}. In Section \ref{sec 4} we prove Theorem \ref{T2} and  Theorem \ref{T3}. Finally, in section \ref{sec 5} we prove Theorem \ref{T4}.
\newline
\newline
\textbf{Acknowledgment:} The author would like to thank F. Martín and M. S\'aez for bringing this problem to his attention and for all the support they have provided. Furthermore, I would like to thank A. Martinez-Triviño for his advice and encouragement about this topic.

\section{Graphical Rotationally Symmetric $Q_k$-Translators}\label{sec2}

In this section, we derive some geometric equations for graphical rotationally symmetric $Q_k$-translators in $\rr^{n+1}$. 
\\
 Let $M$  be a $Q_k$-translator. Then, after applying  an isometry of $\rr^n$, we may locally describe $M$ as the graph of a function $u:B_r(0)\subset\rr^{n}\to\rr$. Then, we can write Equation \eqref{Q_k-trans} as
\begin{align}\label{graph}
    \dfrac{1}{\sqrt{1+|D u|^2}}=Q_k(D u,D^2u),
\end{align}
where $Q_k(q,X)=\dfrac{S_{k+1}(q,X)}{S_k(q,X)}$, and the function $S_{k}(q,X)$ is given below for $q\in\rr^n$ and a symmetric matrix $X$.
\begin{proposition}
Let $M$ be a hypersurface in $\rr^{n+1}$ and let $p\in M$. If around $p$ the hypersurface is described by the graph of a function $u:B_r(0)\subset\rr^n\to\rr$ it follows that the elementary symmetric polynomial on the principal curvatures of $M$ at $p$ are given by,
\begin{align*}
    S_{k}(D u,D^2 u):=S_{k}(\W)=\dfrac{\delta_{i_1\ldots i_k}^{l_1\ldots l_k}}{k} h_{i_1}^{l_1}\ldots h_{i_k}^{l_k},
 \end{align*}
where
\begin{align*}
    &\delta_{i_1\ldots i_k}^{l_1\ldots l_k}=
    \begin{cases}
    &1,\mbox{ if all } i_1,\ldots,i_k\mbox{ are distinct and is an even permutation of }l_1,\ldots,l_k
    \\
    &-1,\mbox{ if all } i_1,\ldots,i_k\mbox{ are distinct and is an odd permutation of }l_1,\ldots,l_k
    \\
    &0,\mbox{ in other case }
    \end{cases}
\end{align*}
and $h_i^l=\left(\dfrac{\delta^{lk} D_{ki}u}{\sqrt{1+|D u|^2}}-\dfrac{D^luD^ku D_{ki}u}{(1+|D u|^2)^{\frac{3}{2}}}\right)$ are the coefficients of the Weirgarten map in the standard coordinates. Here we are using the Riemann summation convention of upper and lower indexes.
\end{proposition}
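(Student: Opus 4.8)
The plan is to split the argument into a routine differential‑geometric computation for graphs and a purely algebraic identity for the generalized Kronecker delta; the only real work is careful index bookkeeping, and I do not expect a substantive obstacle.

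First I would record the standard data of the graph. Parametrizing $M$ near $p$ by $X(x)=(x,u(x))$ for $x\in B_r(0)$, the coordinate tangent fields are $X_i=(e_i,D_iu)$, so the induced metric is $g_{ij}=\delta_{ij}+D_iu\,D_ju$ and, with the upward orientation, the unit normal is $\nu=(1+|Du|^2)^{-1/2}(-Du,1)$. A one‑line check (Sherman--Morrison, or direct verification that $g_{ik}g^{kj}=\delta_i^j$) gives the inverse metric $g^{ij}=\delta^{ij}-\dfrac{D^iu\,D^ju}{1+|Du|^2}$. Since $X_{ij}=(0,D_{ij}u)$, the second fundamental form with respect to $\nu$ is $h_{ij}=\pI{X_{ij},\nu}=\dfrac{D_{ij}u}{\sqrt{1+|Du|^2}}$, and raising one index via $h_i^l=g^{lk}h_{ki}$ reproduces exactly the displayed expression for the Weingarten coefficients.

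Second, the principal curvatures $\lambda_1,\dots,\lambda_n$ of $M$ at $p$ are by definition the eigenvalues of the Weingarten operator, hence of the matrix $(h_i^l)$; although this matrix need not be symmetric \emph{as a matrix}, it is self‑adjoint for $g$, so its eigenvalues are real, and in any case $S_k$ of a matrix depends only on its spectrum. It then remains to invoke the algebraic fact that, for every $n\times n$ matrix $A=(a_i^l)$,
\begin{align*}
S_k(A)=\frac{1}{k!}\,\delta_{i_1\ldots i_k}^{l_1\ldots l_k}\,a_{l_1}^{i_1}\cdots a_{l_k}^{i_k},
\end{align*}
which follows from the total antisymmetry of $\delta_{i_1\ldots i_k}^{l_1\ldots l_k}$: the right‑hand contraction equals $k!$ times the sum of all principal $k\times k$ minors of $A$, and that sum is precisely the coefficient $S_k(A)$ in $\det(tI+A)=\sum_{k=0}^nS_k(A)t^{n-k}=\sum_{k=0}^n\sigma_k(\lambda)\,t^{n-k}$. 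Applying this with $A=(h_i^l)$ and matching indices yields the claimed formula for $S_k(\W)=S_k(Du,D^2u)$.

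The only delicate points are bookkeeping ones: keeping index placements consistent with the Einstein convention of the statement, using that $S_k$ is a similarity invariant so that the lack of symmetry of $(h_i^l)$ is harmless, and pinning down the combinatorial factor in front of the Kronecker‑delta contraction (the $1/k$ in the statement is to be read as $1/k!$, the two agreeing for $k=1,2$), which one confirms at once in those low cases. No analytic difficulty arises; the proposition is a normalization lemma recording the shape‑operator formula for graphs that is used in the rotationally symmetric computations of the subsequent sections.
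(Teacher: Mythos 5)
Your argument is correct and is exactly what the paper's one-line proof hides behind its citation: the paper simply points to Equation (1) of Reilly's 1973 article, which is the Kronecker-delta expression for $S_k$ of a matrix, and leaves the graph computation of $g_{ij}$, $g^{ij}$, $\nu$, $h_{ij}$, and $h^l_i=g^{lk}h_{ki}$ implicit; you carry out precisely those steps. Your observation that the displayed $1/k$ should be the standard normalization $1/k!$ (they agree only for $k\le 2$) is a genuine and worthwhile correction to the statement as printed, and your remark that $S_k$ of the non-symmetric matrix $(h^l_i)$ is well-defined because it depends only on the spectrum (or because the operator is $g$-self-adjoint) is a point the paper does not spell out.
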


\begin{proof}
The proof easily follows from Equation (1) in \cite{reilly_1973}. 
\end{proof}
In the rotationally symmetric setting, we consider cylindrical coordinates $(\theta,r,s)$ on $\rr^{n+1}$ given by, 
\begin{align*}
\theta=(\theta_1,\ldots,\theta_{n-1}),\: r=|x|\mbox{ and }s=x_{n+1},
\end{align*}
where $\theta_i$ are the canonical coordinates on $\Sp^{n-1}$ and $|\cdot|$ denotes the Euclidean norm.
\newline
Then, a rotationally symmetric $Q_k$-translator graphs, or \textbf{ros} $Q_k$-translator for short,
is a hypersurface $M$ given by the graph of a rotationally symmetric function $u:B_R(0)\subset\rr^n\to\rr$ such that,
\begin{align}\label{M}
M=\set{(r\theta,\uu(r)):\theta\in \Sp^{n-1},0\leq r< R},
\end{align} 
where $\uu(r)=u(|x|)$.
\begin{remark}
We will use the dot notation to denote radial derivatives $\frac{\partial}{\partial r}$ and also bold letters for rotationally symmetric functions.
\end{remark}

The following proposition contains geometrical properties of \textbf{ros} $Q_k$-translators.

\begin{proposition}\label{23}
Let $M$ be  a $Q_k$-translator of the form \eqref{M}, then we have  the following quantities at $p\in M$ in the cylindrical frame $\set{\partial_{\theta_1},\ldots,\partial_{\theta_{n-1}},\partial_r,\partial_s}$ :
\begin{enumerate}
    \item[(a)] Unit normal vector: $\nu=\dfrac{\partial_s-\dot{\uu}\partial_r}{\sqrt{1+\dot{\uu}^2}}$.
    
    \item[(b)] Principal directions and curvatures: $\partial_{\theta_i}$, $\dot{\uu}\partial_s+\partial_r$ and $\lambda_i=\dfrac{\dot{\uu}}{r\sqrt{1+\dot{\uu}^2}}$,  $\lambda_{n}=\dfrac{\ddot{\uu}}{(1+\dot{\uu}^2)^{\frac{3}{2}}}$, respectively.
    
    \item[(c)] The function $S_{l}$ evaluated at the principal curvatures:
    \begin{align*}
    S_{l}(\lambda)=\dfrac{\binom{n-1}{l-1}}{r^{n-1}}\dfrac{d}{dr}\left(r^{n-l}\varphi(\dot{\uu})^{l}\right),\mbox{ where }\varphi(x)=\dfrac{x}{\sqrt{1+x^2}}.   
    \end{align*}
         
    \item[(d)] The function $Q_k$ evaluated at the principal curvatures:
    \begin{align*}
        Q_k(\lambda)=\dfrac{(n-k)}{(k+1)r\sqrt{1+\dot{\uu}^2}}\cdot\dfrac{ (n-k-1)(1+\dot{\uu}^2)\dot{\uu}^2+(k+1)r\ddot{\uu}\dot{\uu}}{(n-k)(1+\dot{\uu}^2)\dot{\uu}+kr\ddot{\uu}}.
    \end{align*}
    \item[(e)] The ODE for \textbf{ros} $Q_k$-translators is given by
    \begin{align}\label{Q_k-equation}
         \ddot{\uu}=\frac{n-k}{k+1}(1+\dot{\uu}^2)\frac{\dot{\uu}}{r}\dfrac{(r(k+1)-(n-k-1)\dot{\uu}) }{((n-k)\dot{\uu}-rk)}.
    \end{align}
\end{enumerate}
\end{proposition}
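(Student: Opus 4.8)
The plan is to verify the five items in the order stated, each one a direct consequence of the previous; the only substantive ingredients are the graph formula for $h^i_j$ recorded just above the statement and two elementary binomial identities.

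\textbf{Items (a) and (b).} I would parametrize $M$ by $F(\theta,r)=(r\theta,\uu(r))$ with $\theta\in\Sp^{n-1}$, so that a tangent frame at $p$ consists of the spherical directions $\partial_{\theta_i}$ (which span the tangent space of the round sphere of radius $r$ sitting in the slice $\{s=\uu(r)\}$) together with the profile vector $\partial_r F=\dot{\uu}\,\partial_s+\partial_r$. Since the first $n-1$ of these lie in a horizontal slice, the unit normal $\nu$ can carry no $\partial_{\theta_i}$ component; writing $\nu=a\,\partial_s+b\,\partial_r$ and imposing $\pI{\nu,\dot{\uu}\,\partial_s+\partial_r}=0$ and $|\nu|=1$ (with the orientation making the $\partial_s$ component positive) gives (a). For the curvatures I would apply the graph formula for $h^i_j$ to $u(x)=\uu(|x|)$: one has $Du=\dot{\uu}\,x/r$, and $D^2u$ diagonalizes with eigenvalue $\ddot{\uu}$ along $x/r$ and eigenvalue $\dot{\uu}/r$ with multiplicity $n-1$ on the orthogonal complement. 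Because $Du$ is radial, substituting into $h^i_j$ shows that the profile direction is principal with curvature $\ddot{\uu}/(1+\dot{\uu}^2)^{3/2}$, while every direction orthogonal to $Du$ — in particular each $\partial_{\theta_i}$ — is principal with curvature $\dot{\uu}/(r\sqrt{1+\dot{\uu}^2})$, which is (b). (Care is needed here with the non-orthonormal cylindrical frame and with the sign convention, so that a convex graph such as the one in Theorem \ref{T1} has positive principal curvatures.)

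\textbf{Items (c) and (d).} Write $\mu:=\varphi(\dot{\uu})/r$ for the $(n-1)$-fold curvature and $\lambda_n$ for the remaining one, and note $\lambda_n=\varphi'(\dot{\uu})\ddot{\uu}=\tfrac{d}{dr}\varphi(\dot{\uu})$ since $\varphi'(x)=(1+x^2)^{-3/2}$. Splitting $S_l(\lambda)=\sum_{|I|=l}\prod_{i\in I}\lambda_i$ according to whether the index of $\lambda_n$ is chosen gives $S_l(\lambda)=\binom{n-1}{l}\mu^l+\binom{n-1}{l-1}\mu^{l-1}\lambda_n$. On the other hand, expanding $\tfrac{d}{dr}\bigl(r^{n-l}\varphi(\dot{\uu})^l\bigr)$ by the product rule and using $\binom{n-1}{l}=\tfrac{n-l}{l}\binom{n-1}{l-1}$ produces exactly these two terms, proving (c). For (d) I would form $Q_k=S_{k+1}/S_k$, factor $\mu^k$ from the numerator and $\mu^{k-1}$ from the denominator, and simplify the ratios $\binom{n-1}{k+1}/\binom{n-1}{k}=\tfrac{n-k-1}{k+1}$ and $\binom{n-1}{k-1}/\binom{n-1}{k}=\tfrac{k}{n-k}$ to get
\begin{align*}
Q_k(\lambda)=\frac{n-k}{k+1}\,\mu\cdot\frac{(n-k-1)\mu+(k+1)\lambda_n}{(n-k)\mu+k\lambda_n};
\end{align*}
substituting $\mu=\dot{\uu}/(r\sqrt{1+\dot{\uu}^2})$ and $\lambda_n=\ddot{\uu}/(1+\dot{\uu}^2)^{3/2}$ and clearing the common factor $r(1+\dot{\uu}^2)^{3/2}$ from the inner fraction yields the stated expression.

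\textbf{Item (e).} A \textbf{ros} $Q_k$-translator in the $x_{n+1}=s$ direction satisfies $Q_k(\lambda)=\pI{\nu,\partial_s}$, and by (a) the right-hand side equals $(1+\dot{\uu}^2)^{-1/2}$. Setting the formula from (d) equal to $(1+\dot{\uu}^2)^{-1/2}$, multiplying by $\sqrt{1+\dot{\uu}^2}$ and cross-multiplying gives
\begin{align*}
(n-k)\bigl[(n-k-1)(1+\dot{\uu}^2)\dot{\uu}^2+(k+1)r\ddot{\uu}\dot{\uu}\bigr]=(k+1)r\bigl[(n-k)(1+\dot{\uu}^2)\dot{\uu}+kr\ddot{\uu}\bigr];
\end{align*}
collecting the terms containing $\ddot{\uu}$ on one side and factoring yields $(k+1)r\,\ddot{\uu}\bigl((n-k)\dot{\uu}-kr\bigr)=(n-k)(1+\dot{\uu}^2)\dot{\uu}\bigl((k+1)r-(n-k-1)\dot{\uu}\bigr)$, which is \eqref{Q_k-equation} after dividing by $(k+1)r\bigl((n-k)\dot{\uu}-kr\bigr)$. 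Everything here is routine computation; the only points demanding attention are the diagonalization of the Weingarten map in the cylindrical frame in (b) and the binomial bookkeeping that makes (c)--(d) collapse, which I expect to be the main (though purely computational) obstacle.
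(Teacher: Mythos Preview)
Your proposal is correct and follows essentially the same route as the paper: the paper outsources (a)--(c) to a reference while you compute them directly, and for (d)--(e) both you and the paper form the quotient $S_{k+1}/S_k$ from the formula in (c), simplify via the binomial ratios, and then algebraically isolate $\ddot{\uu}$ from the translator equation $Q_k(\lambda)=(1+\dot{\uu}^2)^{-1/2}$. Your intermediate expression $Q_k=\tfrac{n-k}{k+1}\,\mu\cdot\tfrac{(n-k-1)\mu+(k+1)\lambda_n}{(n-k)\mu+k\lambda_n}$ is a clean way to organize the same computation the paper carries out with the derivative form of (c).
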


\begin{proof}
We refer the reader to \cite{Torres} for a proof of the three first properties. For (d) we have,
\begin{align*}
Q_k(\lambda)&=\dfrac{S_{k+1}(\lambda)}{S_k(\lambda)}=\dfrac{(n-k)}{(k+1)}\cdot\dfrac{ \dfrac{d}{dr}\left(r^{n-k-1}\varphi(\dot{\uu})^{k+1}\right) }{ \dfrac{d}{dr}\left(r^{n-k}\varphi(\dot{\uu})^{k}\right) } 
\\
&=\dfrac{(n-k)}{(k+1)r\sqrt{1+\dot{\uu}^2}}\cdot\dfrac{ (n-k-1)(1+\dot{\uu}^2)\dot{\uu}^2+(k+1)r\ddot{\uu}\dot{\uu}}{(n-k)(1+\dot{\uu}^2)\dot{\uu}+kr\ddot{\uu}}.
\end{align*}
For  (e), we  have $\pI{\nu,\eps_{n+1}}=\dfrac{1}{\sqrt{1+\dot{\uu}^2}}$. Furthermore, Equation \eqref{Q_k-trans} can be written as
\begin{align*}
    \dfrac{1}{\sqrt{1+\dot{\uu}^2}}=\dfrac{(n-k)}{(k+1)r\sqrt{1+\dot{\uu}^2}}\dfrac{ (n-k-1)(1+\dot{\uu}^2)\dot{\uu}^2+(k+1)r\ddot{\uu}\dot{\uu}}{(n-k)(1+\dot{\uu}^2)\dot{\uu}+kr\ddot{\uu}}.
\end{align*}
Finally, Equation \eqref{Q_k-equation} follows by reordering the terms of the last equation.
\end{proof}

We finish this section by noting that Equation \eqref{Q_k-equation} can be reduced to a first order ODE by taking $\vv=\dot{\uu}$. This method was used to study the behavior at infinity in the Mean Curvature case  in \cite{clutterbuck_2005}.  
	
\begin{remark}
	For each $k\in\set{0,\ldots,n-1}$, the ODE that represents \textbf{ros}  $Q_k$-translator graphs  in $\rr^{n+1}$ is given by 
	\begin{align}
		\begin{cases}
			\dot{\vv}(r)=&\FF_k(r,\vv),
			\\
			\vv(0)=&0,
		\end{cases}	
		\end{align} 
	where the slope function $\FF_k:\set{(r,s)\in\rr^+\times\rr: s\neq\frac{k}{n-k}r}\to\rr$ is
	\begin{align*}
		\FF_k(r,s)=\frac{n-k}{k+1}(1+s^2)\dfrac{s}{r}\left(\dfrac{r(k+1) -(n-k-1)s}{(n-k)s-kr}\right).
	\end{align*}	
\end{remark}

\section{Non-trivial $Q_{n-1}$-translators}\label{sec3}

In this section we focus on the existence of \textbf{ros} $Q_{n-1}$-translators in $\rr^{n+1}$. 
\newline

The ODE for \textbf{ros} $Q_{n-1}$-translators is given by,
\begin{align}\label{ODE 1}
\begin{cases}
\dot{\vv}(r)&=\FF_{n-1}(r,\vv)=\dfrac{\vv(1+\vv^2)}{\vv-(n-1)r},
\\
\vv(0)&=0.
\end{cases}
\end{align}

\begin{remark}\label{4.1}
	As we mentioned in the introduction, Equation \eqref{ODE 1} has an explicit solution for $n=2$ of the form,
	 \begin{align*}
	 	\vv(r)=\dfrac{2r}{1-r^2},\: r\in[0,1).
	 \end{align*}	
	 Then, we can construct a $Q_1$-translator $M$ in $\rr^3$ of the form \eqref{M} by integration. Indeed, the function $\uu=\int_0^r\vv(s)ds=-\ln(1-r^2)$ satisfies, 
	\begin{align}\label{6.4}\
		\begin{cases}
			\ddot{\uu}=\dot{\uu}\dfrac{(1+\dot{\uu}^2)}{(\dot{\uu}-r)},
			\\
			\uu(0)=\dot{\uu}(0)=0.
		\end{cases}
	\end{align}
	On the other hand, we note that $M$ is complete, but non-entire since $\lim\limits_{r\to 1}\uu(r)=\infty$, and it is strictly convex because its principal curvatures
	\begin{align*}
		\lambda_1=\dfrac{2}{(1-r^2)\sqrt{1+\dot{\uu}^2}}\mbox{ and }  \lambda_{2}=\dfrac{2(1+r^2)}{(1-r^2)^2(1+\dot{\uu}^2)^{\frac32}},
	\end{align*}
 	 are positive in $[0,1)$.
\end{remark}
We summarize the result above in the following proposition,
\begin{proposition}
	The surface
	\begin{align*}
	M=\set{(r\theta,-\ln(1-r^2)):\theta\in \Sp^{1},r\in[0,1) }
	\end{align*}
   is a complete, non-entire and strictly convex \textbf{ros} $Q_1$-translator in $\rr^3$.
\end{proposition}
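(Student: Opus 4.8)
The plan is to verify directly that $\uu(r) = -\ln(1-r^2)$ yields, via \eqref{M}, a hypersurface solving \eqref{Q_k-trans} with $k = 1$, $n = 2$, and then to read off the remaining properties from the explicit formulas of Proposition \ref{23}. First I would check that $\vv(r) = 2r/(1-r^2)$ solves the reduced equation \eqref{ODE 1}: a short computation gives $\vv - r = r(1+r^2)/(1-r^2)$ and $1+\vv^2 = (1+r^2)^2/(1-r^2)^2$, whence $\vv(1+\vv^2)/(\vv - r) = 2(1+r^2)/(1-r^2)^2 = \dot{\vv}$, while $\vv(0) = 0$ and $\dot{\vv}(0) = 2$. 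Since $\vv - r = r(1+r^2)/(1-r^2) > 0$ on $(0,1)$, the solution stays in the region $\{\,s > r\,\}$ and never meets the singular set $\{\,s = r\,\}$ of the slope function $\FF_1$, so \eqref{ODE 1} holds classically. Integrating, $\uu(r) = \int_0^r 2s\,(1-s^2)^{-1}\,ds = -\ln(1-r^2)$ with $\uu(0) = \dot{\uu}(0) = 0$, which is precisely \eqref{6.4}; by parts (d)--(e) of Proposition \ref{23} this makes the graph $M$ over $B_1(0)$ a \textbf{ros} $Q_1$-translator, and $M$ is smooth across the axis $r = 0$ because $u(x) = -\ln(1-\abs{x}^2)$ is smooth on $B_1(0)$.

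Next I would handle non-entireness and completeness. The function $\uu$ is defined only on $B_1(0)\subset\rr^2$ and $\lim_{r\to 1}\uu(r) = +\infty$, so $M$ is not a graph over all of $\rr^2$ and cannot be continued past $\partial B_1(0)$; hence $M$ is non-entire. For completeness, the fibres of $p\mapsto r(p)$ are compact (a circle, or the vertex), so any sequence in $M$ without a convergent subsequence must have $r(p)\to 1$; as the height $x_3$ is $1$-Lipschitz on $M$ for the induced metric, the intrinsic distance from such a point to the vertex $(0,0,0)$ is at least $\uu(r) = -\ln(1-r^2)\to\infty$. Therefore every closed metric ball of $M$ is compact and $M$ is complete.

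Finally, strict convexity follows from Proposition \ref{23}(b): using $\ddot{\uu} = 2(1+r^2)/(1-r^2)^2$,
\[
\lambda_1 = \frac{\dot{\uu}}{r\sqrt{1+\dot{\uu}^2}} = \frac{2}{(1-r^2)\sqrt{1+\dot{\uu}^2}},\qquad
\lambda_2 = \frac{\ddot{\uu}}{(1+\dot{\uu}^2)^{3/2}} = \frac{2(1+r^2)}{(1-r^2)^2(1+\dot{\uu}^2)^{3/2}},
\]
which are plainly positive for $r\in(0,1)$; at the axis one passes to the limit $r\to 0$, where $\dot{\uu}\sim 2r$ gives $\lambda_1\to 2$ and $\lambda_2\to 2$. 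Hence both principal curvatures are positive throughout $M$, so $M$ is strictly convex, and all four assertions are proved.

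I do not expect a genuine obstacle here; the statement essentially repackages the computations of Remark \ref{4.1}. The two points deserving a little care are confirming that the candidate $\vv$ avoids the singular line $s = r$ of $\FF_1$ (so that \eqref{ODE 1} is solved in the classical sense and $M$ is a bona fide translator) and checking smoothness together with the curvature values at the rotation axis, both of which reduce to $\vv(0) = 0$ and $\vv(r) - r > 0$ on $(0,1)$.
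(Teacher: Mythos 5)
Your proof is correct and follows essentially the same route as the paper, which establishes the Proposition via the computations in Remark \ref{4.1}: verifying that $\vv(r) = 2r/(1-r^2)$ solves \eqref{ODE 1}, integrating to get $\uu(r) = -\ln(1-r^2)$, and reading off completeness, non-entireness and strict convexity from the blow-up of $\uu$ at $r=1$ and the explicit formulas for $\lambda_1,\lambda_2$. You supply somewhat more detail than the paper does (checking that $\vv-r>0$ keeps the solution away from the singular locus of $\FF_1$, the Lipschitz argument for completeness, and the limit of the curvatures at the axis), but these are refinements of the same argument rather than a different one.
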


 \begin{remark}
 	By a comparison argument any solution $\vv(r)$ to Equation \eqref{ODE 1} is increasing. In fact, the function $\ww_0(r)=nr$ satisfies,
 	\begin{align*}
 	\dot{\ww}_0(r)=n< n(1+n^2r^2)=\dfrac{\ww_0(1+\ww_0^2)}{\ww_0-(n-1)r},\mbox{ for }r>0. 
 	\end{align*}	
 	This implies that $\ww_0$ is a sub-solution to equation \eqref{ODE 1}. Consequently, $\vv\geq \ww_0$ for $r\geq 0$, which implies $\dot{\vv}(r)=\FF_{n-1}(r,\vv)>0$ for $r>0$.
 \end{remark}
 The following proposition provides better barrier solutions in our setting.
\begin{proposition}\label{Barriers}
The functions $\ww_1(r)=\dfrac{nr}{\sqrt{1-n^2r^2}}$ and $\ww_2(r)=-\ln(1-nr)$ satisfies,
\begin{align*}
\dot{\ww}_1(r)\geq \FF_{n-1}(r,\ww_1)\mbox{ and }\ddot{\ww}_2(r)\leq \FF_{n-1}(r,\dot{\ww}_2),\mbox{ in }\left(0,\dfrac{1}{n}\right).
\end{align*}
\end{proposition}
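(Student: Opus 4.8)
The statement is a direct verification, and the plan is to compute both sides of each inequality from the explicit expression $\FF_{n-1}(r,s)=\dfrac{s(1+s^2)}{s-(n-1)r}$ and reduce each to an elementary inequality on $\pare{0,\tfrac1n}$.

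For $\ww_1(r)=\dfrac{nr}{\sqrt{1-n^2r^2}}$, the first step is to record the three quantities $\dot{\ww}_1(r)=\dfrac{n}{(1-n^2r^2)^{3/2}}$,\ $1+\ww_1^2=\dfrac{1}{1-n^2r^2}$ and $\ww_1-(n-1)r=\dfrac{r\bigl(n-(n-1)\sqrt{1-n^2r^2}\bigr)}{\sqrt{1-n^2r^2}}$. On $\pare{0,\tfrac1n}$ we have $\sqrt{1-n^2r^2}<1$, hence $n-(n-1)\sqrt{1-n^2r^2}>0$, so $\ww_1$ lies in the domain of $\FF_{n-1}(r,\cdot)$ and plugging in gives $\FF_{n-1}(r,\ww_1)=\dfrac{n}{(1-n^2r^2)\bigl(n-(n-1)\sqrt{1-n^2r^2}\bigr)}$. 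Cancelling the positive factor $\dfrac{n}{1-n^2r^2}$ from both sides, the inequality $\dot{\ww}_1\geq\FF_{n-1}(r,\ww_1)$ becomes $n-(n-1)\sqrt{1-n^2r^2}\geq\sqrt{1-n^2r^2}$, i.e.\ $n\sqrt{1-n^2r^2}\leq n$, which is clear.

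For $\ww_2(r)=-\ln(1-nr)$, I would compute $\dot{\ww}_2(r)=\dfrac{n}{1-nr}$ and $\ddot{\ww}_2(r)=\dfrac{n^2}{(1-nr)^2}$ and substitute $s=\dot{\ww}_2$ into $\FF_{n-1}$. Setting $D(r):=n-(n-1)r(1-nr)$, one has $\dot{\ww}_2-(n-1)r=\dfrac{D(r)}{1-nr}$, and since $0<r(1-nr)\leq\tfrac1{4n}$ on $\pare{0,\tfrac1n}$ we get $D(r)\geq n-\tfrac{n-1}{4n}>0$, so $\dot{\ww}_2$ stays in the domain of $\FF_{n-1}(r,\cdot)$. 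After clearing the positive denominators, the inequality $\ddot{\ww}_2\leq\FF_{n-1}(r,\dot{\ww}_2)$ is equivalent to $n\,D(r)\leq(1-nr)^2+n^2$; expanding and writing $t:=nr\in(0,1)$ this reduces to $1+(n-3)t-(n-2)t^2\geq 0$. The final observation is that $1+(n-3)t-(n-2)t^2=(1-t)\bigl(1+(n-2)t\bigr)$, which is nonnegative on $[0,1)$ for every $n\geq 2$, completing the verification.

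Everything above is elementary; the only point that deserves care — the main (if minor) obstacle — is to check the signs of the two denominators $\ww_1-(n-1)r$ and $\dot{\ww}_2-(n-1)r$ on $\pare{0,\tfrac1n}$ before clearing them, so that the direction of the inequalities is preserved and $\FF_{n-1}$ is genuinely evaluated in its domain. Both are positive there: $\ww_1\geq nr>(n-1)r$, and $\dot{\ww}_2=\tfrac{n}{1-nr}>n>(n-1)r$ since $r<\tfrac1n$.
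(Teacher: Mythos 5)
Your proof is correct and takes essentially the same route as the paper: a direct computation of both sides of each inequality followed by an algebraic reduction. Two small remarks. For $\ww_1$, the paper organizes the computation a bit more conceptually by first observing the identity $\dot{\ww}_1=\ww_1(1+\ww_1^2)/r$ (so $\ww_1$ solves the analogous ODE with denominator $r$ exactly), and then using $\ww_1\geq nr\Rightarrow\ww_1-(n-1)r\geq r$ to flip the denominator; your reduction to $n\geq n\sqrt{1-n^2r^2}$ is equivalent but reached by expanding everything in closed form. For $\ww_2$, the paper arrives at the same inequality $n\leq\frac{(1-nr)^2+n^2}{n-(n-1)r+n(n-1)r^2}$ and dismisses it as an ``easy computation''; your explicit factorization $1+(n-3)t-(n-2)t^2=(1-t)\bigl(1+(n-2)t\bigr)$ with $t=nr$ actually completes that step and is a nice addition. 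Your attention to the sign of the two denominators before clearing them is the right caution and matches the implicit use in the paper.
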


\begin{proof}
For the function $\ww_1(r)$ we have,
	\begin{align}\label{1}
		\dot{\ww}_1=\dfrac{\ww_1(1+\ww_1^2)}{r}\geq \dfrac{\ww_1(1+\ww_1^2)}{\ww_1-(n-1)r}=\FF_{n-1}(r,\ww_1).
	\end{align}
In the inequality part of \eqref{1} we use that $\ww_1(r)\geq nr$, which holds for $r\in[0,\frac{1}{n})$.
	\newline
On the other hand, for the function $\ww_2(r)$ we have that
\begin{align*}
\ddot{\ww}_2\leq \dfrac{\dot{\ww}_2(1+\dot{\ww}_2^2)}{\dot{\ww}_2-(n-1)r}, 
\end{align*}
 is equivalent to,
\begin{align*}
n\leq\dfrac{(1-nr)^2+n^2}{n-(n-1)r+n(n-1)r^2}.
\end{align*}
An easy computation reveals that the last inequality holds for $r\in[0,\frac{1}{n})$. 
\end{proof}
\begin{remark}\label{4.4}
	Note that the functions $\int\limits_0^{r}\ww_1(s)ds$ and $\ww_2(r)$  act as  super and sub solutions, respectively, to Equation \eqref{Q_k-equation}. Then, by a comparison argument, any solution $\uu(r)$ to Equation \eqref{Q_k-equation} with initial condition $\uu(0)=\dot{\uu}(0)=0$, will exist in $[0,n^{-1})$ and develop a singularity at $r=n^{-1}$. 
\end{remark}
Now, we study the existence problem for dimension $n>2$. In the following, we are concerned with finding an interval of the form $[0,\delta_T]$ to apply standard existence arguments to Equation \eqref{ODE 1}. 
\newline
Let $0<\delta_T<\frac{\pi}{2n}$ and  we consider the Banach space
\begin{align}\label{X}
	X(\delta_T)=\set{\vv\in\mathcal{C}([0,\delta_T])
		:\vv(0)=0,\:\sqrt{r}\leq \vv(r)\leq \tan(nr)},
\end{align}
endowed with the uniform norm. Over $X(\delta_T)$ we consider the map 
\begin{align*}
	T:X(\delta_T)&\to \mathcal{C}([0,\delta_T])
	\\
	\vv(r)&\to T(\vv)(r)=\int\limits_0^{r}\dfrac{\vv(1+\vv^2)}{(\vv-(n-1)s)}ds.
\end{align*}	
The goal of the next proposition is to find $\delta_T$ such that the map $T$ is contractive.
\begin{proposition} \label{36}
The map $T$ satisfies:
\begin{enumerate}
	\item For $\delta_T\leq \min\set{n^{-1}\arccos\left(\sqrt{\frac{n}{n+1}}\right),\frac{1}{n^2}}$,  $T(X(\delta_T))\subset X(\delta_T)$ holds.
	\newline
	\item There exists $\delta_T>0$ such that $T$ is a contraction map on $X(\delta_T)$.
\end{enumerate}	
\end{proposition}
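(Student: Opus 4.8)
The plan is to establish the two claims of Proposition \ref{36} separately, treating part (1) as an invariance (a priori bounds) statement and part (2) as a Lipschitz estimate on the nonlinearity $g(r,s)=\dfrac{s(1+s^2)}{s-(n-1)r}$ restricted to the graph-band defining $X(\delta_T)$.

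\textbf{Part (1): invariance.} Fix $\vv\in X(\delta_T)$ and set $\ww=T(\vv)$. First I would note $\ww(0)=0$ and that $\ww$ is continuous (indeed $C^1$), since on $[0,\delta_T]$ with $\delta_T$ small we have $\vv(s)\geq\sqrt{s}$ while $(n-1)s\leq (n-1)\delta_T\, $ is much smaller than $\sqrt{s}$ for $s$ bounded away from $0$ and comparable only near $s=0$ — so the denominator $\vv(s)-(n-1)s$ stays positive; here the choice $\delta_T\leq n^{-2}$ enters, because for $s\le\delta_T\le n^{-2}$ one has $(n-1)s\le (n-1)n^{-2}<n^{-1}\le s^{-1/2}\cdot s = \sqrt s$ is not quite it, so more carefully one checks $\sqrt s>(n-1)s\iff 1>(n-1)\sqrt s\iff s<(n-1)^{-2}$, which follows from $s\le\delta_T\le n^{-2}<(n-1)^{-2}$. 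Hence the integrand is positive and $\ww$ is nondecreasing. For the \emph{lower} bound $\ww(r)\ge\sqrt r$: since $\vv(s)-(n-1)s\le\vv(s)$, the integrand is at least $1+\vv(s)^2\ge 1$, so $\ww(r)\ge r$; but we need $\sqrt r$, which is \emph{larger} than $r$ for $r<1$, so I instead bound the integrand below by using $\vv(s)\ge\sqrt s$ and $\vv(s)-(n-1)s\ge\sqrt s(1-(n-1)\sqrt s)\,$; then the integrand is $\ge \dfrac{\sqrt s\,(1)}{\sqrt s(1-(n-1)\sqrt{\delta_T})}=\dfrac{1}{1-(n-1)\sqrt{\delta_T}}\ge 1$, and actually $\ge \dfrac{1}{2\sqrt s}$ precisely when $2\sqrt s\ge 1-(n-1)\sqrt{\delta_T}$, which fails near $0$; so the sharp way is to integrate $\dfrac{1}{2\sqrt s}$ which gives exactly $\sqrt r$, and to verify $\dfrac{\vv(1+\vv^2)}{\vv-(n-1)s}\ge\dfrac{1}{2\sqrt s}$ pointwise on the band. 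This reduces to $2\sqrt s\,\vv(1+\vv^2)\ge\vv-(n-1)s$, and using $\vv\ge\sqrt s$ the left side is $\ge 2s(1+\vv^2)\ge 2s\ge$ ... this needs $\vv\le\tan(ns)$ to control the right side, i.e. $\vv-(n-1)s\le\tan(ns)$; so one checks $2\sqrt s\,\sqrt s\,(1+s)\ge\tan(ns)$ for $s\le\delta_T$, i.e. $2s(1+s)\ge\tan(ns)$, which holds for $s$ small since $\tan(ns)\sim ns$ and we just need $\delta_T$ small enough — this is where $\delta_T\le n^{-1}\arccos\sqrt{n/(n+1)}$ or a similar explicit bound is extracted. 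For the \emph{upper} bound $\ww(r)\le\tan(nr)$: the cleanest route is to show the integrand is $\le n\sec^2(ns)=\dfrac{d}{ds}\tan(ns)$ on the band, i.e. $\dfrac{\vv(1+\vv^2)}{\vv-(n-1)s}\le n(1+\tan^2(ns))$; since $\vv\le\tan(ns)$ one has $\vv(1+\vv^2)\le\tan(ns)(1+\tan^2(ns))$, so it suffices that $\dfrac{\tan(ns)}{\vv-(n-1)s}\le n$, i.e. $\vv-(n-1)s\ge n^{-1}\tan(ns)\ge s$; using $\vv\ge\sqrt s$ this is $\sqrt s-(n-1)s\ge s$, i.e. $\sqrt s\ge ns$, i.e. $s\le n^{-2}$ — precisely the second constraint on $\delta_T$. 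Integrating from $0$ to $r$ then yields $\ww(r)\le\tan(nr)$.

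\textbf{Part (2): contraction.} For $\vv_1,\vv_2\in X(\delta_T)$ write
\begin{align*}
T(\vv_1)(r)-T(\vv_2)(r)=\int_0^r\bigl(g(s,\vv_1(s))-g(s,\vv_2(s))\bigr)\,ds,
\end{align*}
with $g(s,\sigma)=\dfrac{\sigma(1+\sigma^2)}{\sigma-(n-1)s}$. The key is a bound $\left|\partial_\sigma g(s,\sigma)\right|\le C$ uniformly for $(s,\sigma)$ in the region $s\in(0,\delta_T]$, $\sqrt s\le\sigma\le\tan(ns)$. Computing, $\partial_\sigma g=\dfrac{(1+3\sigma^2)(\sigma-(n-1)s)-\sigma(1+\sigma^2)}{(\sigma-(n-1)s)^2}$; the denominator is $\ge(\sigma-(n-1)s)^2\ge(\sqrt s-(n-1)s)^2\ge s(1-(n-1)\sqrt{\delta_T})^2\ge \tfrac14 s$ once $\delta_T$ is small, while the numerator is $O(\sigma^3+s\sigma^2+\sigma)=O(s)$ on the band because $\sigma\le\tan(n\delta_T)=O(\sqrt{\delta_T}\cdot\sqrt s)$ roughly — more precisely $\sigma\le\tan(ns)\le 2ns$ for $s$ small, hence $\sigma^3\le 8n^3s^3$, $s\sigma^2\le 4n^2 s^3$, and the leading $\sigma\cdot$ term combines so that the numerator is $\le C' s$. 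Thus $\left|\partial_\sigma g(s,\sigma)\right|\le C's/(\tfrac14 s)=4C'$, a constant independent of $\delta_T$. Then by the mean value theorem $\left|g(s,\vv_1)-g(s,\vv_2)\right|\le 4C'\,\|\vv_1-\vv_2\|_\infty$, so
\begin{align*}
\|T(\vv_1)-T(\vv_2)\|_\infty\le 4C'\,\delta_T\,\|\vv_1-\vv_2\|_\infty,
\end{align*}
and choosing $\delta_T<\min\{\,n^{-1}\arccos(\sqrt{n/(n+1)}),\,n^{-2},\,(8C')^{-1}\,\}$ makes $T$ a contraction on the complete metric space $X(\delta_T)$.

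\textbf{Main obstacle.} The routine-looking but genuinely delicate point is the positivity and lower bound of the denominator $\sigma-(n-1)s$ near $s=0$: the band $X(\delta_T)$ is designed so that $\vv(s)\approx\sqrt s$ dominates $(n-1)s$, but only barely, and every estimate above (invariance of both bounds, and the uniform Lipschitz constant) hinges on quantifying $\sigma-(n-1)s\ge c\sqrt s$ for an explicit $c>0$. Getting the constants in the two stated upper bounds for $\delta_T$ to come out exactly as $n^{-1}\arccos(\sqrt{n/(n+1)})$ and $n^{-2}$ — rather than some messier expression — requires choosing the comparison functions $\sqrt r$ and $\tan(nr)$ so that the differential inequalities $\partial_s(\text{barrier})\gtrless g(s,\text{barrier})$ become, after the substitution $\sigma=\tan(ns)$ (so $1+\sigma^2=\sec^2(ns)$) or $\sigma=\sqrt s$, algebraic inequalities in $\cos(ns)$ or $\sqrt s$ that can be solved in closed form; I expect the $\arccos\sqrt{n/(n+1)}$ to arise precisely from the threshold $\cos^2(n\delta_T)=n/(n+1)$ in the upper-barrier computation.
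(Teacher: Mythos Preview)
Your Part~(2) is essentially correct and parallels the paper: the paper expands the difference algebraically rather than invoking $\partial_\sigma g$, but the crucial input is the same denominator bound $\vv_i-(n-1)s\ge \sqrt{s}/n$ (from $\vv_i\ge\sqrt{s}$ and $\delta_T\le n^{-2}$), after which one integrates a bounded quantity over $[0,\delta_T]$ and shrinks $\delta_T$. The real problems are in Part~(1).

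\textbf{Upper bound.} Your implication runs the wrong way. You need $\vv-(n-1)s\ge n^{-1}\tan(ns)$; you observe $n^{-1}\tan(ns)\ge s$ and then verify only the \emph{weaker} inequality $\vv-(n-1)s\ge s$. That does not suffice, and in fact with $\vv=\sqrt{s}$ at $s=n^{-2}$ one gets $\sqrt{s}-(n-1)s=n^{-2}<n^{-1}\tan(n^{-1})$, so the needed inequality actually fails at your endpoint. The paper avoids this by \emph{not} replacing $\vv$ by $\tan(ns)$ in the numerator: from $\vv\ge\sqrt{s}\ge ns$ (this is exactly where $\delta_T\le n^{-2}$ is used) one has directly $\dfrac{\vv}{\vv-(n-1)s}\le n$, and then $1+\vv^2\le\sec^2(ns)$ gives the integrand $\le n\sec^2(ns)$.

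\textbf{Lower bound.} This is the genuine gap. Your final test inequality $2s(1+s)\ge\tan(ns)$ is \emph{false} for every $n>2$ near $s=0$, since the derivatives at $0$ are $2$ and $n$ respectively. You lost too much by estimating numerator and denominator separately. The paper keeps the full fraction,
\[
\frac{\vv(1+\vv^2)}{\vv-(n-1)s}-\frac{1}{2\sqrt{s}}
=\frac{2\sqrt{s}\,\vv(1+\vv^2)-\vv+(n-1)s}{2\sqrt{s}\,(\vv-(n-1)s)},
\]
uses $\vv\ge\sqrt{s}$ only on the \emph{positive} pieces of the numerator (so $2\sqrt{s}\,\vv\ge 2s$ and $2\sqrt{s}\,\vv^3\ge 2s^2$) while retaining the $-\vv$, and then applies $\vv\le\tan(ns)$ to that term. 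This gives numerator $\ge (n+1)s+2s^2-\tan(ns)\ge (n+1)s-\tan(ns)$. The constraint $\delta_T\le n^{-1}\arccos\sqrt{n/(n+1)}$ is precisely what makes $(n+1)s-\tan(ns)\ge 0$ (via $n\sec^2(ns)\le n+1$, i.e.\ $\cos^2(ns)\ge n/(n+1)$). So, contrary to your closing remark, the $\arccos$ threshold comes from the \emph{lower} barrier $\sqrt{r}$, not the upper one.
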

 
\begin{proof} 
 First, we note that by definition of $T$, $T(\vv)(0)=0$. Furthermore, since $\sqrt{r}\geq nr$ in $[0,\delta_T]$, it follows that $\vv(r)\geq nr$. Consequently, we have that 
 \begin{align}\label{ine}
 	\dfrac{\vv}{\vv-(n-1)r}\leq n.
 \end{align}
 Then, Equation \eqref{ine} implies $T(\vv)(r)\leq \tan(nr)$. In fact, it follows that
 \begin{align*}
 T(\vv)(r)-\tan(nr)=\int_0^{r}\left(\frac{\vv(1+\vv^2)}{(\vv-(n-1)s)}-n\sec^2(ns)\right)ds\leq 0,
\end{align*} 
where the last inequality we use $1+\vv^2(r)\leq 1+\tan^2(nr)=\sec^2(nr)$.
\newline

On the other hand, we note that $T(\vv)(r)\geq  \sqrt{r}$ is equivalent to
\begin{align}\label{ine2}
T(\vv)(r)-\sqrt{r}=\int_0^r\left(\dfrac{\vv(1+\vv^2)}{(\vv-(n-1)s)}-\dfrac{1}{2\sqrt{s}}\right)ds\geq 0.
\end{align}
Since $\sqrt{r}\leq \vv(r)\leq \tan(nr)$ on $[0,\delta_T]$, we have
\begin{align*}
\dfrac{\vv(1+\vv^2)}{(\vv-(n-1)r)}-\dfrac{1}{2\sqrt{r}}	\geq\dfrac{( (n+1)r+2r^2 -\vv)}{2\sqrt{r}(\vv-(n-1)r)}\geq 	\dfrac{(n+1)r-\tan(nr)}{2\sqrt{r}(\vv-(n-1)r)}. 
\end{align*}	
We note that the function $(n+1)r- \tan(nr)$ is non-negative on $\left[0,n^{-1}\arccos\left(\sqrt{\frac{n}{n+1}}\right)\right]$, which implies \eqref{ine2}. This finishes the first part of Proposition \ref{36}.
\newline

In what follows we are going to prove that there is $\delta_T<\min\set{n^{-1}\arccos\left(\sqrt{\frac{n}{n+1}}\right),\frac{1}{n^2}}$ such that $T$ is a contractive map on $X(\delta_T)$. Recall that a contractive map satisfies,
\begin{align*}
\norm{T(\vv_1)-T(\vv_2)}_{\infty}\leq C\norm{\vv_1-\vv_2}_{\infty},
\end{align*} 
for $\vv_1,\vv_2\in X$ and some constant $C<1$. 
\newline
First, we note that
\begin{align}\label{ine3}
\vv_i-(n-1)r\geq \dfrac{\sqrt{r}}{n}, \mbox{on }[0,\delta_T]. 
\end{align}
Then it follows that
\begin{align*}
&\abs{T(\vv_1)-T(\vv_2)}(r)=\int_0^r\abs{\dfrac{\vv_1\left(\vv_2-(n-1)s\right)(1+\vv_1^2)-\vv_2\left(\vv_1-(n-1)s\right)(1+\vv_2^2)}{(\vv_1-(n-1)s)(\vv_2-(n-1)s)}}ds
\\ 
&\leq\norm{\vv_1-\vv_2}_{\infty}n^2\int_0^{\delta_T}\frac{1}{s}\pare{(n-1)s+\vv_1\vv_2(\vv_1+\vv_2)+(n-1)s(\vv_1^2+\vv_1\vv_2+\vv_2^2)}ds 
\end{align*}
\begin{align}\label{int}
\leq\norm{\vv_1-\vv_2}_{\infty}\left(\int_0^{\delta_T}(n-1)+2\frac{\tan^3(ns)}{s}+3(n-1)\tan^2(ns)ds\right).
\end{align}
In the first inequality  we used Equation \eqref{ine3} and for the second inequality we used  $\vv_i(r)\leq\tan(nr)$. 
\newline
Finally, we note that the terms inside the integral in \eqref{int} are all bounded on $[0,\delta_T]$. Therefore, by continuity there exists some $\delta_T>0$ such that,
\begin{align*}
\delta_T\left( (n-1)+2\norm{\frac{\tan^3(nr)}{r}}_{\infty}+3(n-1)\norm{\tan^2(nr)}_{\infty}\right)<1,
\end{align*}
which finishes the proof. 
\end{proof}

\begin{proposition}\label{37}
	In the context of the above proposition, there is a unique fixed point of $T:X(\delta_T)\to X(\delta_T)$ in $\mathcal{C}^1([0,\delta_T])$. 
\end{proposition}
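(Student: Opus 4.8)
The plan is to combine the contraction mapping theorem with a bootstrap/regularity argument. By Proposition \ref{36}, for a suitable small $\delta_T>0$ the map $T$ sends the closed subset $X(\delta_T)$ of the Banach space $\mathcal{C}([0,\delta_T])$ into itself and is a contraction there. Since $X(\delta_T)$ is a closed (hence complete) subset of $\mathcal{C}([0,\delta_T])$, the Banach fixed point theorem yields a unique $\vv\in X(\delta_T)$ with $T(\vv)=\vv$. Writing out the fixed point equation,
\begin{align*}
	\vv(r)=\int_0^r\dfrac{\vv(1+\vv^2)}{\vv-(n-1)s}\,ds,
\end{align*}
we immediately get $\vv(0)=0$ and, since $\vv$ is continuous and the integrand is a continuous function of $s$ on $[0,\delta_T]$ (using the lower bound $\vv(s)-(n-1)s\geq \sqrt{s}/n$ from \eqref{ine3}, so the denominator never vanishes and the integrand is bounded), the fundamental theorem of calculus shows $\vv\in\mathcal{C}^1([0,\delta_T])$ with
\begin{align*}
	\dot{\vv}(r)=\dfrac{\vv(r)(1+\vv(r)^2)}{\vv(r)-(n-1)r}=\FF_{n-1}(r,\vv(r)).
\end{align*}
Thus $\vv$ solves the initial value problem \eqref{ODE 1}.

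The one subtlety is the behaviour near $r=0$: a priori the integrand $\frac{\vv(1+\vv^2)}{\vv-(n-1)s}$ only needs to be bounded for the integral to make sense, but to conclude $\vv\in\mathcal{C}^1$ up to and including $r=0$ we need the integrand to extend continuously to $s=0$. The constraints defining $X(\delta_T)$ give $\sqrt{s}\leq\vv(s)\leq\tan(ns)$, so $\vv(s)\to 0$ as $s\to 0^+$ and $\vv(s)-(n-1)s\sim\vv(s)\geq\sqrt{s}$, whence $\frac{\vv(s)(1+\vv(s)^2)}{\vv(s)-(n-1)s}\to 0$ as $s\to 0^+$; so the integrand does extend continuously to $[0,\delta_T]$ with value $0$ at the origin, and $\dot{\vv}(0)=0$. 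I would state this explicitly, since it is exactly the point where the lower barrier $\sqrt{r}$ built into $X(\delta_T)$ is used.

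For uniqueness in $\mathcal{C}^1([0,\delta_T])$ as opposed to merely in $X(\delta_T)$, I would argue as follows. Any $\mathcal{C}^1$ solution $\tilde{\vv}$ of \eqref{ODE 1} on $[0,\delta_T]$ automatically satisfies the integral equation $T(\tilde{\vv})=\tilde{\vv}$ wherever the right-hand side is defined; by the comparison arguments already recorded in the excerpt (the sub-solution $\ww_0(r)=nr$ in the remark preceding Proposition \ref{Barriers}, giving $\tilde{\vv}(r)\geq nr$, hence $\tilde{\vv}(r)\geq\sqrt{r}$ on $[0,\delta_T]$ after possibly shrinking $\delta_T$ since $\delta_T\leq n^{-2}$ forces $nr\geq\sqrt r$, and the upper barrier $\ww_1$, or directly $\tan(nr)$ via the computation in the proof of Proposition \ref{36}) one checks $\tilde{\vv}\in X(\delta_T)$. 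Then uniqueness of the fixed point in $X(\delta_T)$ forces $\tilde{\vv}=\vv$. The main obstacle is not any single hard estimate — those are all inherited from Proposition \ref{36} — but rather the bookkeeping of making sure every $\mathcal{C}^1$ solution is trapped inside $X(\delta_T)$ so that the abstract uniqueness applies; this is where one must invoke the barrier inequalities to pin down both the lower bound $\sqrt{r}$ and the upper bound $\tan(nr)$.
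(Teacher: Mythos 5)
Your approach is the same as the paper's: invoke Proposition \ref{36} to get a contraction, apply the Banach fixed-point theorem on the complete set $X(\delta_T)\subset\mathcal{C}([0,\delta_T])$, then bootstrap to $\mathcal{C}^1$ via the fundamental theorem of calculus. The paper's version is terser (it just writes the bound $\dot{\vv}(r)\le \sec^2(nr)\tan(nr)/r$ and declares $\vv\in\mathcal{C}^1$), whereas you spell out where the integral representation is being differentiated and where the bounds built into $X(\delta_T)$ keep the denominator away from zero, which is genuinely the right thing to flag.

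However, the step where you pin down the value of the integrand at $s=0$ is wrong. Since $\vv(s)\ge\sqrt{s}\gg (n-1)s$ near $0$, you correctly note $\vv(s)-(n-1)s\sim\vv(s)$, but then
\begin{align*}
\frac{\vv(s)(1+\vv(s)^2)}{\vv(s)-(n-1)s}=\frac{1+\vv(s)^2}{1-(n-1)s/\vv(s)}\longrightarrow \frac{1+0}{1-0}=1,
\end{align*}
not $0$. Indeed the integrand is strictly greater than $1$ for every $s>0$ (the numerator exceeds the denominator factor by factor), so it cannot tend to $0$, and $\dot{\vv}(0)\ne 0$. The conclusion you actually need --- that the integrand extends continuously to $s=0$, so the fixed point is $\mathcal{C}^1$ up to the left endpoint --- survives once the limit is corrected, so this is a computational slip rather than a structural failure. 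Note also that the value $1$ you get here sits uneasily next to the paper's bound, which tends to $n$ at $r=0$, and next to the ODE itself, which forces $\dot\vv(0)=n$ for any genuine $\mathcal{C}^1$ solution with $\vv(0)=0$; this tension traces back to the choice of lower barrier $\sqrt{r}$ in the definition \eqref{X} of $X(\delta_T)$, not to anything in your argument or the paper's Proposition \ref{37}. Finally, your closing paragraph on uniqueness among all $\mathcal{C}^1$ solutions (trapping any such solution inside $X(\delta_T)$ via the barriers and then invoking Banach uniqueness) goes beyond what the paper proves, which only asserts uniqueness of the fixed point inside $X(\delta_T)$; it is a reasonable extra step but would need the barrier comparisons written out in full rather than cited.
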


\begin{proof}
First, we note that by Proposition \ref{36} the map $T:X(\delta_T)\to X(\delta_T)$ is contractive. Then, the Banach Fixed Point Theorem gives a unique $\vv(r)\in X(\delta_T)$ such that $T(\vv)(r)=\vv(r)$ on $[0,\delta_T]$. 
\newline
In addition, since $T$ is an integral operator, the fixed point $\vv(r)$ is differentiable in $[0,\delta_T]$. Moreover, since $nr\leq\vv(r)\leq \tan(nr)$, we have
\begin{align}\label{bound}
\dot{\vv}(r)=\dfrac{\vv(1+\vv^2)}{\vv-(n-1)r}\leq \dfrac{\sec(nr)^2\tan(nr)}{r},
\end{align}
which is uniformly continuous on $[0,\delta_T]$. Therefore, $\vv(r)\in\mathcal{C}^1([0,\delta_T])$ since $\norm{\dot{\vv}}_{\infty}$ is dominated by the uniform norm of \eqref{bound}. 	 
\end{proof}

\begin{proof}[Proof of Theorem \ref{T1}]
Let $\vv(r)\in X(\delta_T)\cap \mathcal{C}^1([0,\delta_T])$ be the unique fixed point of $T$ found in Proposition \ref{37}. Then, we can find a solution to 
\begin{align}\label{ODE 2}
\begin{cases}
 \ddot{\uu}(r)&=\dfrac{\dot{\uu}(1+\dot{\uu}^2)}{(\dot{\uu}-(n-1)r)}
 \\
 \uu(0)&=0,\dot{\uu}(0)=0.
\end{cases}
\end{align}
 of the form $\uu(r)=\int\limits_0^r\vv(s)ds$ for $r\in[0,\delta_T]$. 
 \\
 On the other hand, as we mentioned in Remark \ref{4.4}, the solution $\uu(r)$ can be extended to the interval $[0,n^{-1})$ and it develops a singularity at $r=n^{-1}$.
 \newline
 Then, it follows that the hypersurface $M$ of the form  \eqref{M} is a complete $\textbf{ros}$ $Q_{n-1}$-translator. Moreover, the strict convexity is guaranteed by the equations for the principal curvatures given in Proposition \ref{23} together with $\ddot{\uu}>0$ on $[0,n^{-1})$.  
\end{proof}

\section{Proofs of Theorems \ref{T2} and \ref{T3}}\label{sec 4}
The proof of Theorem \ref{T2} is inspired by the given in \cite{moller_2014}  for  the case $k=0$. 
\begin{proof}[Proof of Theorem \ref{T2}]
Let $p\in M_1\cap M_2$ be an interior point such that $T_pM_1=T_pM_2$ and $M_1$ lies at one side of $M_2$. Then, after a rotation and translation, there exists $r>0$ such that each $M_i\cap B(p,r)$ is the graph of a smooth function $u_i:B_r(0)\subset\set{x_{n+1}=0}\to\rr$, where $p=(0,u_i(0))$. In addition,  since $M_1$ lies at one side of $M_2$, we assume that $u_1>u_2$ in $B_r(0)\setminus\set{0}$ and $u_1(0)=u_2(0)$.
\\
Now we define a $1$-parameter family of functions, 
\begin{align*}
u_{s}=(1-s)u_1+su_2,\mbox{ for }s\in[0,1].	
\end{align*}	
Note that for each $s\in (0,1)$, the graph of $u_s:B_r(0)\to\rr$ is strictly convex. This holds since the convex combination of a positive definite matrix with a positive semi-definite matrix is positive definite. Here, the involved matrices are the second fundamental forms of $u_1$ and $u_2$, respectively. 
\newline
In particular, this fact implies that $u_s$ is an admissible family for the operator,  
\begin{align*}
    E(s)=Q_{k}(h^i_j(s))-\pI{\nu(s),\eps_{n+1}},
\end{align*}
where the $s$ dependence is related to each $u_s$.
\\
Consequently,  we have that,
\begin{align*}
 0=E(u_1)-E(u_2)=\int\limits_0^1\dfrac{\partial}{\partial s}E(u_s)ds.
\end{align*}
Furthermore, we explicitly calculate $\dfrac{\partial E}{\partial s}(s)$. Indeed, we have
\begin{align*}
&\dfrac{\partial E}{\partial s}(s)
\\
&=\dfrac{\partial Q_k}{\partial h^i_j}\dfrac{\partial}{\partial s}h^i_{j}(s)-\dfrac{\partial}{\partial s}\dfrac{1}{\sqrt{1+|Du_s|^2}}
\\
&=\dfrac{\partial Q_k}{\partial h^i_j}\dfrac{\partial}{\partial s}\left[\left(\dfrac{\delta_{ik}}{\sqrt{1+|Du_s|^2}}-\dfrac{D_iu_sD_ku_s}{(1+|Du_s|^2)^{3/2}}\right)D_{kj}u_s\right]+
\dfrac{\pI{D(u_2-u_1),Du_s}}{(1+|Du_s|^2)^\frac{3}{2}}
\\
&=\dfrac{\partial Q_k}{\partial h^i_j}\left[\left(-\delta_{ik}\dfrac{\pI{D(u_2-u_1),Du_s}}{(1+|Du_s|^2)^{\frac{3}{2}}}-\dfrac{(D_i(u_2-u_1))D_ku_s+(D_k(u_2-u_1))D_iu_s}{(1+|Du_s|^2)^{\frac{3}{2}}}\right.\right.
\\
&\left.\left.+3\dfrac{D_iu_sD_ku_s\pI{D(u_2-u_1),Du_s}}{(1+|Du_s|^2)^{\frac{5}{2}}} \right)D_{kj}u_s-\left(\delta_{ik}+\dfrac{D_iu_sD_ku_s}{1+|Du_s|^2}\right)\dfrac{D_{kj}(u_2-u_1)}{\sqrt{1+|Du_s|^2}}\right]
\\
&\hspace{.5cm}\:+\dfrac{\pI{D(u_2-u_1),Du_s}}{(1+|Du_s|^2)^{\frac{3}{2}}}.
\end{align*}
On the other hand, by the Mean Value Theorem, there exists $s_0\in(0,1)$ such that, 
\begin{align}\label{0=}
0=\int\limits_{0}^1\dfrac{\partial E}{\partial s}(u_s)ds=\dfrac{\partial E}{\partial s}(u_{s_{0}}). 
\end{align}
We claim that the function $v=u_2-u_1$ is constant in $B_r(0)$. In fact, $v$ reaches an interior maximum at $0$. In particular, $D^2v$ is definite non-positive and $Dv=0$ at $0$. Then, by  Equation \eqref{0=} at $0$, we have
\begin{align}\label{1=}
    0=\dfrac{\partial Q_k}{\partial h^i_j}(h^i_j(s_0))\left(\delta_{ik}-\dfrac{D_iu_{s_{0}}D_ku_{s_{0}}}{1+|Du_{s_{0}}|^2}\right)\dfrac{D_{kj}v}{1+|Du_{s_{0}}|^2}.
\end{align}
Moreover, since the principal curvatures of $u_{s_{0}}$ lies in $\Gamma_n$, and by choosing  a smaller $r$ if it is necessary,  we may assume that the term 
\begin{align*}
\dfrac{\partial Q_k}{\partial h^i_j}(h^i_j(s_0))\left(\delta_{ik}-\dfrac{D_iu_{s_{0}}D_ku_{s_{0}}}{1+|Du_{s_{0}}|^2}\right)
\end{align*}
is uniformly positive in $B_r(0)$. Then, by Hopf's Maximum Principle, $v\equiv 0$ in $B_r(0)$. This means that $M_1\cap B(p,r)=M_2\cap B(p,r)$. Finally,  since both graphs are connected we may apply the Uniqueness Continuation Principle for uniformly elliptic linear operators, see for instance \cite{gilbra_trudinger}, to obtain that $M_1=M_2$. 

For the Boundary Tangency Principle, we only need to change $B(p,r)$ with a hemisphere $B(p,r)\cap \set{x_{n+1}\geq 0}$, here we consider $\Pi=\set{x_{n+1}=0}$. Then, since the intersection $M_i\cap\Pi$ is transversal, the function $v=u_2-u_1$ satisfies $\dfrac{\partial v}{\partial N}(p)=0$ where $N=e_1$ is the normal unit vector of $\partial M_i$ at $p$. On the other hand, by the Boundary Hopf's Maximum Principle, $\dfrac{\partial v}{\partial N}(p)>0$ a contradiction. Therefore, the same argument holds  in this case to conclude the result.   
\end{proof}

We end this section with the proofs of the non-existence theorem for entire $Q_{n-1}$-translator in $\rr^{n+1}$. Recall that an entire hypersurface in $\rr^{n+1}$ corresponds to a graph of a function defined on all $\rr^n$.  
\begin{proof}[Proof of Theorem \ref{T3}]
	We prove this by contradiction. Assume that there is an entire $Q_{n-1}$-translator $M$ given by a function $u:\rr^n\to\rr$ such that its principal curvatures belong to the cone $\Gamma_{n}$. This last property implies that $M$ is strictly convex.
	\\
	On the other hand, by Theorem \ref{T1},  there is a \textbf{ros} $Q_{n-1}$-translator $C_{n-1}$ of the form \eqref{M}  for each $n\geq 2$. Recall that $C_{n-1}$ is a complete, non-entire and strictly convex hypersurface. 
	\\
	Then, translating suitably $C_{n-1}$ over $M$, we can find a $t_0>0$ such that $C_{n-1}+t\eps_{n+1}$ lies strictly above from $M$ for $t\geq t_0$. Note that this can be done since $C_{n-1}$ is not an entire graph. Now, we may translate $C_{n-1}+t\eps_{n+1}$ downward  until it touches $M$ for the first time. Finally, by Theorem \ref{T2}, we obtain that $M=C_{n-1}$ which is a contradiction.
\end{proof}

\section{Proof of Theorem \ref{T4}}\label{sec 5}

We use the Method of Moving Planes to show that $Q_{n-1}$-translators $M\subset\rr^{n+1}$ which are strictly convex and asymptotic to the cylinder $\Sp^{n-1}(r_n)$ are rotationally symmetric, and consequently the $Q_{n-1}$-translator found in Theorem \ref{T1}. 
\newline
In addition, we base our proof by the given in \cite{Paco_2014} for translating soliton of the Mean Curvature flow with one end asymptotic to the bowl soliton. 
\newline

We first mention that  $Q_k$-translators are invariant under rotation around the $x_{n+1}$-axis.  Indeed, let $R$ be a rotational field which fix the $x_{n+1}$-axis and $M=F(\Omega)$ be a $Q_k$-translator such that its principal curvatures belong to $\Gamma_{k}$. Then, it follows that the function 
\begin{align*}
f=\pI{R(F),\nu},
\end{align*}
satisfies the equation 
\begin{align*}
\square_k f+\nabla_{n+1}f+\abs{A_k}^2=0,
\end{align*}
where $|A_k|^2=\dfrac{\partial Q_k}{\partial h_{ij}}h_{il}h_{lj}$. This fact can be seen by taking the flow 
$\partial_t\varphi_t=R\circ\varphi_t$ where $\varphi_t$ is a 1-parameter of diffeomorphism such that $\varphi_0=Id$. Then, by composing these family with the $Q_k$-flow of $M$ (i.e:$F_t=F+te_{n+1}$), it follows
\begin{align*}
\partial_t(\varphi_t\circ F_t)=R\circ(\varphi_t\circ F_t).
\end{align*}
Finally, by choosing a normal reparametrization $\phi_t$ such that 
\begin{align*}
\partial_t(\varphi_t\circ F_t\circ\phi_t)=R\circ(\underbrace{\varphi_t\circ F_t\circ\phi_t}_{\tilde{F_t}})=f(\tilde{F}_t)\nu(\tilde{F}_t),
\end{align*}
and taking time derivatives, we can see that
\begin{align*}
\partial_tQ_k&=\dfrac{\partial Q_k}{\partial h_{ij}}\partial_th_{ij}=\dfrac{\partial Q_k}{\partial h_{ij}}\left(\nabla_i\nabla_j f+h_{il}h_{lj}\right)=\square_kf+|A_k|^2,
\\
\partial_tQ_k&=\partial_t\pI{\nu,e_{n+1}}=-\pI{\nabla f,e_{n+1} }=-\nabla_{n+1}f. 
\end{align*}
In consequence, under the hypothesis of Theorem \ref{T4}, it is enough to show that $M$ is symmetric along the plane $\set{x_1=0}$.  
\newline
To this purpose, we set the following notation.  Let $M\subset\rr^{n+1}$ be strictly convex  $Q_{n-1}$-translator asymptotic to the cylinder $\Sp^{n-1}(r_n)\times \rr$.  We set,
\begin{align*}
&\Pi(t)=\set{x_1=t}, M_+(t)=M\cap\set{x_1\geq t},M_{-}(t)=M\cap\set{x_1\leq t},
\\
&M_{+}^{*}(t)=\set{(2t-x_1,x_2,\ldots,x_{n+1})\in\rr^{n+1}:(x_1,\ldots,x_{n+1})\in M_+(t)}, 
\\
&M_{-}^{*}(t)=\set{(2t-x_1,x_2,\ldots,x_{n+1})\in\rr^{n+1}:(x_1,\ldots,x_{n+1})\in M_{-}(t)},
\end{align*}
recall that $M_{\pm}^*(t)$ is just the reflection of $M_{\pm}(t)$ along the plane $\Pi(t)$ (see Fig 2.).
\newline

\begin{figure} \label{fig3}
	\centering
	\includegraphics[width=0.33\columnwidth]{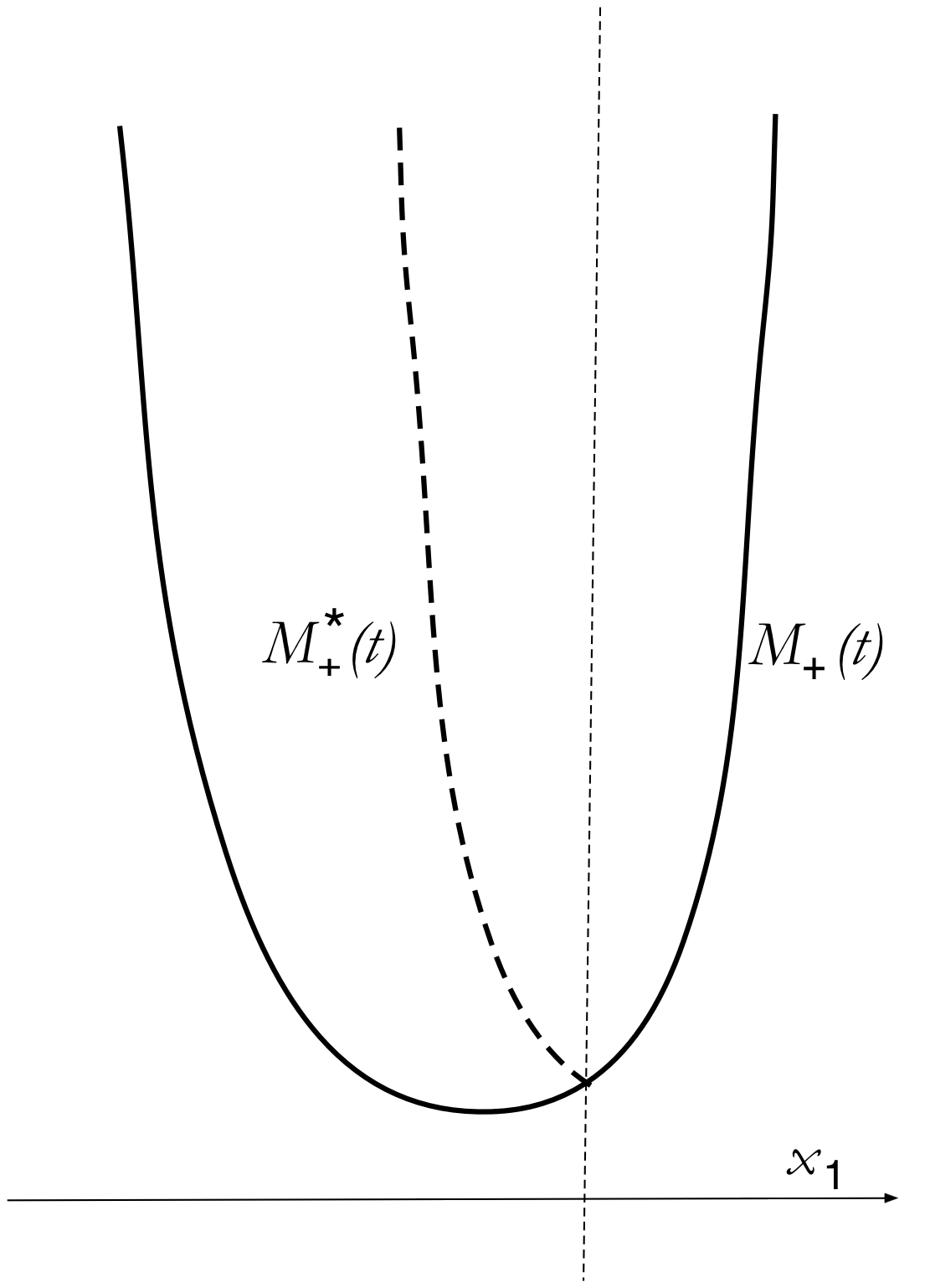}
	\vskip-1mm
	\caption{Alexandrov's Moving planes methods.}
\end{figure}

\begin{definition}\label{Def Order}
	Let $A,B$ be two set of $\rr^{n+1}$. We say that $A$ is on the right side of $B$, denoted by $B\leq A$ if, and only if, for every $x\in \Pi(0)$ such that,
	\begin{align*}
\pi_1^{-1}(\set{x})\cap A\neq \emptyset \mbox{ and }	\pi_1^{-1}(\set{x})\cap B \neq\emptyset,
	\end{align*}
where $\pi_1(x_1,\ldots,x_{n+1})=(0,x_2,\ldots,x_{n+1})$, it follows that 
\begin{align*}	
\sup\set{p_1(p): p\in \pi_1^{-1}(\set{x})\cap B}\leq \inf\set{p_1(p): p\in\pi_1^{-1}(\set{x})\cap A},
\end{align*}
here $p_1(x_1,\ldots,x_{n+1})=x_1$. 
\end{definition}
Recall that for arbitrary sets, the relation $B\leq A$ is not a partial order, but for sets given by the graph of a function over planes $\Pi(0)$ works as a partial order. 
\newline

Consequently, the moving plane method consists in to show
that the set 
\begin{align*}
I=\set{t\in [0,r_n):M_+(t) \mbox{ is a graph over }\Pi(0) \mbox{ and } M_-(t)\leq M_+^*(t)},
\end{align*}
is the interval $[0,r_n)$. Then, by similar symmetric arguments, we finally obtain $M_+^*(0)=M_-(0)$ giving that $M$ is symmetric along the $\Pi(0)$ plane. To this purpose, we show that $I$ is a not empty, open and close set of $[0,r_n)$ whose minimum is $0$. 
\begin{claim}\label{Claim1}
The set $I\neq\emptyset$.
\end{claim}
\begin{proof}
	 First, since $M$ is asymptotic to the cylinder  $\Sp^{n-1}(r_n)\times\rr$, we can choose $0<r_\delta<r_n$ such that the coordinates of $M\setminus B^{n+1}(0,r_\delta)$ satisfy $r_n-\delta\leq|x_i|< r_n$ for $i=1,\ldots,n$ and $r_\delta<|x_{n+1}|$. Here $B^{n+1}(0,r_\delta)$ denotes an Euclidean ball of $\rr^{n+1}$.

	 Let $t\in [r_n-\delta,r_n)$. Then, by the strict convexity of $M$, the Gauss map $\nu$ of $M$ maps $M\setminus B^{n+1}(0,r_\delta)$ to a closed region whose boundary is the equator of $\Sp^{n-1}$. In particular, the euclidean distance satisfies
	 \begin{align*}
	 \dist{\nu(M_+(t)), \Pi(0)}>0,
	 \end{align*}
	 which means that $M_+(t)$ can be written by the graph of a function $u_{t}=x_1-t$ such that $0 \leq u_t<\delta$ over a region $\Omega_t$, here $x_1=p_1(x)$ for $x\in M_+(t)$.
	 
	 Second, for the other property, we note that $M_-\left(\dfrac{r_n-\delta}{2}\right)\leq M_+^*\left(\dfrac{r_n-\delta}{2}\right)$. Indeed, according to Definition \ref{Def Order}, we have
	 \begin{align*}
	r_n-\delta -x_1-y_1\geq\dfrac{r_n-\delta}{2}-y_1\geq0,
	 \end{align*} 
	 where $y_1=p_1(y)$ and $y\in M_-(t)$. Therefore, $\dfrac{r_n-\delta}{2}\in I$.
	 
	 \end{proof}

\begin{claim}
	For every $t\in I$, we have $[t,r_n)\subset I$. In particular, $I$ is an open set. 
\end{claim}	

\begin{proof}
	Let $t\in I$ and $t<s<r_n$. Then, by definition, $M_+(t)$ can be written by the graph of the function $u_t=x_1-t\geq 0$ defined over a region $\Omega_t$, where $x_1=p_1(x)$ and $x\in M_+(t)$.  First, we note that $\partial\Omega_t=M\cap\Pi(t)$. This fact, together with the equation 
	\begin{align*}
	\square_{n-1}u_t=-Q_{n-1}\pI{\nu,e_1}<0,
	\end{align*}
	implies that $u_t$ cannot reach a local minimum in the interior of $\Omega_t$, since otherwise, $u_t$ will be constant, which contradicts that $M$ is strictly convex and asymptotic to the cylinder $\Sp^{n-1}(r_n)\times\rr$. Therefore, $\Omega_t$ does not contain any compact connected components, and by the asymptotic behavior of $M$, must be a closed unbounded set of $\Pi(t)$.   
	\newline
	 Moreover, by definition, we have that $M_+(s)$ can be written by the graph of a function defined over the plane $\Pi(t)$, since $M_+(s)\subset M_+(t)$. Thus, $M_+(s)$ is graph over $\Pi(0)$. In addition, we note that 
	 \begin{align*}
	\inf\set{ p_{n+1}(x):x\in M\cap\Pi(t)}\leq \inf\set{ p_{n+1}(x):x\in M\cap\Pi(s)},
	\end{align*}
	where $p_{n+1}(x_1,\ldots,x_{n+1})=x_{n+1}$. This fact implies, $\Omega_s\subset\Omega_t$. Then, since $M_-(t)\leq M_+^*(t)$, it follows that $M_-(s)\leq M_+^*(s)$. Consequently, $[s,r_n)\subset I$. 
\end{proof}

\begin{claim}
	The set $I$ is closed.
\end{claim}

\begin{proof}
Let $t_n\in I$ a sequence of point such that $t_n\to t_0$. By previous part we have that $(t_0,r_n)\subset I$. First, we show by contradiction that $M_+(t_0)$ is a graph of the function $u_{t_{0}}=x_1-t_0$ over a region $\Omega_{t_{0}}\subset \Pi(t_0)$.
\newline
Let $p,q\in M_+(t_0)$ such that $p_i(p)=p_i(q)$ for $i>2$ and $p_1(p)<p_1(q)$. Then, by the previous part, it follows that $p_1(p)=t_0$. Consequently, we have two possibilities for $p$, it belongs in the interior of $\Omega_{t_{0}}$ or at the boundary $\partial \Omega_{t_{0}}=M\cap \Pi(t_0)$.
\newline 
Note that for the first case, we can choose $\eps>0$ such that 
\begin{align*}
p_1(q)>p_1(p)+2\eps=t_0+2\eps.
\end{align*} 
 Then, it follows that
 \begin{align*}
2(t_0+\eps)-p_1(q)<t_0=p_1(p), 
\end{align*}
a contradiction with $M_-(t_0+\eps)\leq M_+^*(t_0+\eps)$ for every $\eps>0$.
\newline
For the second case, we consider the cylindrical solid $C=\set{(t,z):t\in[t_0,r_n),\:z\in\Omega_{t_{0}} }$. Then, $C$ intersect $M_+(t_0)$ at points from the outside of $\Pi(t_0)$. But this contradicts the fact of $M_+(t_0+\eps)$ being a graph over $\Pi(t_0)$ for sufficiently small $\eps>0$.  Therefore, $M_+(t_0)$ is the graph of the function $u_{t_0}=x_1-t_0$ over the region $\Omega_{t_{0}}\subset \Pi(t_0)$. Finally, we note that the relation $M_-(t_0)\leq M_+^*(t_0)$ will follows by the continuity of the family $u_{t_n}=x_1-t_n$. 
	\end{proof}

\begin{claim}
	The minimum of $I$ is $0$.
\end{claim}

\begin{proof}
We argue by contradiction, we assume that $\inf I=t_0>0$. As we did in Claim \ref{Claim1}, there exist some positive number $h>r_\delta$ such that the Euclidean distance
\begin{align}\label{Fact}
\dist{\nu \left(M_+(t_0)\cap \set{x_{n+1}>h}\right), \Pi(0)}>0.
\end{align}
Therefore, we can choose $\eps_0$ such that $M_+(t_0-\eps_0)\cap \set{x_{n+1}>h}$ can be written by the graph of a function over $\Pi(0)$, and satisfies 
\begin{align}\label{Eq 5.5}
M_-(t_0-\eps)\cap\set{x_{n+1}>h}\leq M_+^*(t_0-\eps_0)\cap\set{x_{n+1}>h}. 
\end{align}
Now we consider the compact set
\begin{align*}
K=M\cap\set{x_{n+1}\leq h}.
\end{align*}
Since $t_0\in I$, it follows that $K_+(t_0)$ is a graph over the hyperplane $\Pi(0)$. Moreover, by the strict convexity of $M$ it follows that 
\begin{align*}
\nu\left(K_+(t_0)\right)\cap\Pi(0)=\emptyset,
\end{align*}
\begin{figure} \label{fig2}
	\includegraphics[width=0.33\columnwidth]{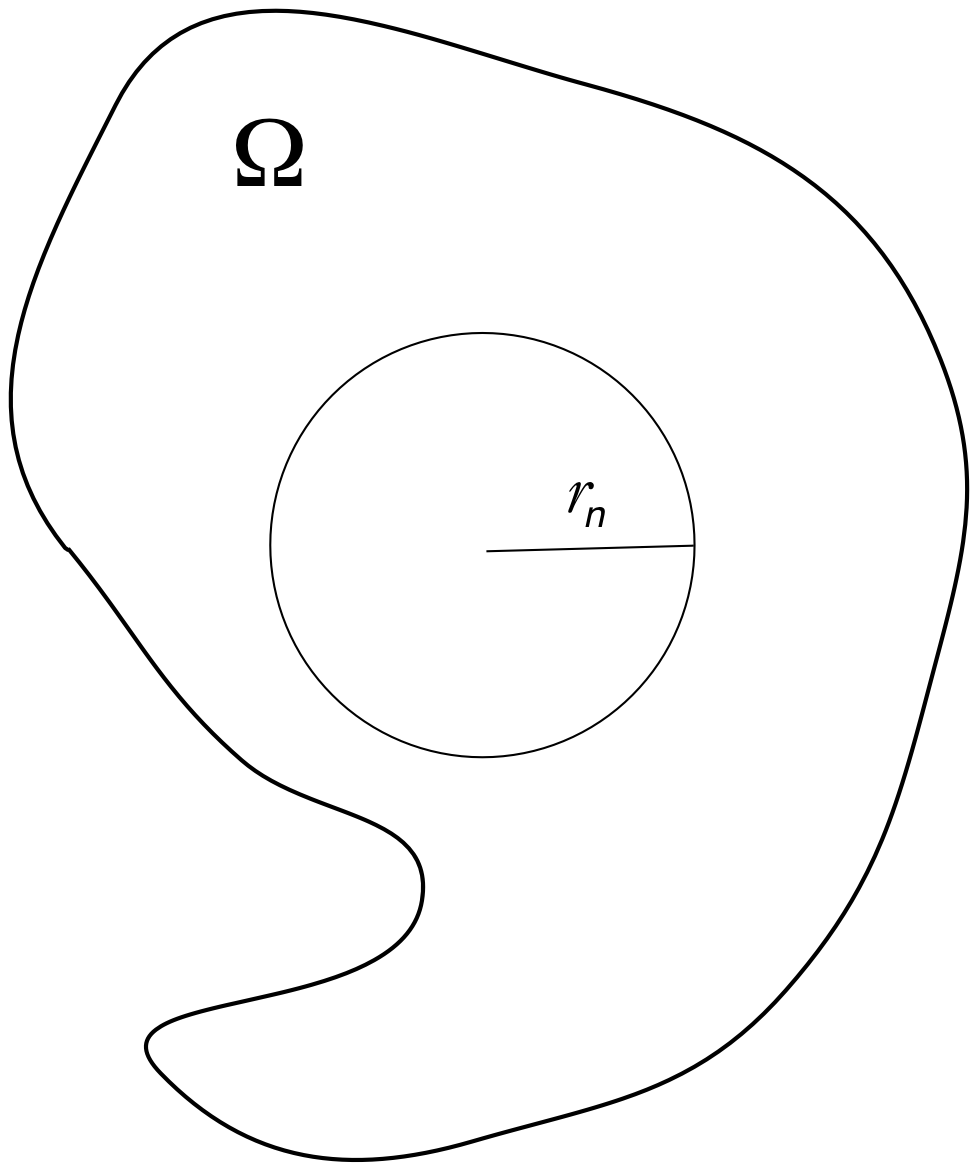}\hspace{3cm} \includegraphics[width=0.33\columnwidth]{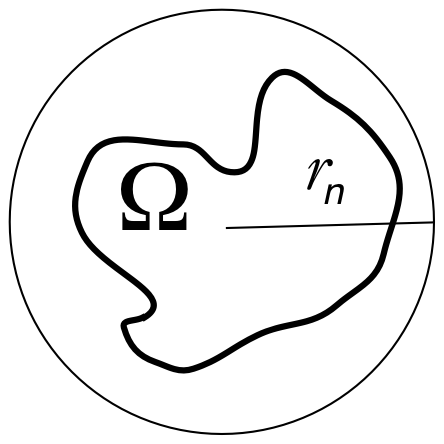}
	\vskip-1mm
	\caption{Non-existence of $Q_{n-1}$-translator.}
\end{figure}
because otherwise we would have a vanishing principal curvature of $M$. Furthermore, by compactness of $K_+(t_0)$, there exist $\eps_1\in (0,\eps_0)$ such that 
\begin{align*}
K_+(t)\cap\Pi(0)=\emptyset,\: t\in[t_0-\eps_1, t_0].
\end{align*}
In particular, this means that $K_+(t)$ is a graph over $\Pi(0)$ for $t\in[t_0-\eps_1,t_0]$. This fact together with \eqref{Fact}, implies that $M_+(t)$ is a graph over $\Pi(0)$ for $t_0-\eps_1\leq t$.

 We note that 	
 	\begin{align*}
 	M_+^*(t)\cap M_-(t)\cap K\subset K_+(t_0-\eps_1),
 	\end{align*}
 	for $t\geq t_0-\eps_1$. In addition, since $M_-(t_0)\leq M_+^*(t_0)$, it follows that 
 	\begin{align*}
 	M_+(t_0)\cap M_-(t_0)=M\cap\Pi(t_0). 
 	\end{align*}
 	Indeed, if it not the case, let $p\in \left(M_+(t_0)\cap M_-(t_0)\right)\setminus\left(M\cap \Pi(t_0)\right)$.  Then, by the Interior Tangency principle, we have $M_+^*(t_0)=M_-(t_0)$. This means that the end of $M$ will have at $x_1=t_0>0$ a plane of symmetry, but this fact contradicts our asymptotic behavior of $M$.
 	\newline
 	Hence, by compactness, we can choose  $0<\eps_2\leq \eps_1$ such that  
 	\begin{align*}
 	M_+^*(t)\cap M_-(t)\cap K=\Pi(t)\cap K, \mbox{ for all } t\in [t_0-\eps_2,t_0].
 	\end{align*}
 	Then, it follows that,
 	\begin{align*}
  \left(M_+^*(t)\cap M_-(t)\right)\setminus M\cap\Pi(t)\subset M\cap\set{x_{n+1}>h}, \mbox{ for }t\in[t_0-\eps_2,t_0].  
   \end{align*}
  Thus, combining the above line with Equation \eqref{Eq 5.5}, we obtain 
  \begin{align*}
 M_+^*(t)\cap M_-(t)=M\cap \Pi(t),\mbox{ for }t\in[t_0-\eps_2,t_0].   
	\end{align*}     	
Then, by standard continuity arguments, it follows that $M_-(t)\leq M_+^*(t)$ for $t\in[t_0-\eps_2,t_0]$. Consequently, $t_0-\eps\in I$ a contradiction with $t_0=\inf I$. Therefore, $t_0=0$.
\end{proof} 
In particular, $M_-(0)\leq M_+^*(0)$. A symmetric argument also will show that $M_+(0)\leq M_-^*(0)$. Therefore, $M_+^*(0)=M_-(0)$, this shows that $M$ is symmetric with respect the plane $\Pi(0)$, finalizing the proof of Theorem \ref{T4}. 
 
\begin{remark}
	From theorems \ref{T2} and \ref{T4}, we can see that if $\Sigma$ is a $Q_{n-1}$-translator such that is a graph over a precompact set $\Omega\subset\rr^n$ with diameter $\mbox{diam}(\Omega)<2r_{n}$, it does not exist. In addition, the same result will follows if $\Omega$ contains a point such that $B(x,r_n)\subset \Omega$ (see Fig. 3).   
\end{remark}

%

\begin{bibdiv}
\begin{biblist}

\bib{andrews_2004}{article}{
	author={Andrews, B.},
	title={Pinching estimates and motion of hypersurfaces by curvature
		functions},
	journal={J. Reine Angew. Math.},
	volume={608},
	date={2007},
	pages={17--33},
	issn={0075-4102},
	review={\MR{2339467}},
	doi={10.1515/CRELLE.2007.051},
}

\bib{haslhofer_2015}{article}{
	author={Altschuler, S.},
	author={Wu, L.}
	title={Translating surfaces of the non-parametric mean curvature flow with prescribed contact angle},
	journal={Calculus of Variations and Partial Differential Equations.},
	volume={2},
	date={1994},
	number={1},
	pages={101--111},
	issn={1465-3060},
	doi={10.1007/BF01234317},
}

\bib{caffarelli_nirenberg_spruck_1988}{article}{
   author={Caffarelli, L.},
   author={Nirenberg, L.},
   author={Spruck, J.},
   title={Nonlinear second-order elliptic equations. V. The Dirichlet
   problem for Weingarten hypersurfaces},
   journal={Comm. Pure Appl. Math.},
   volume={41},
   date={1988},
   number={1},
   pages={47--70},
   issn={0010-3640},
   review={\MR{917124}},
   doi={10.1002/cpa.3160410105},
}

\bib{Choi-Daskalopoulos_2016}{article}{
   author={Choi, K},
   author={Daskalopoulos, P.}
   title={The $Q_k$ flow on complete non-compact graphs},
   date={2016},
   journal={Preprint arXiv:1603.03453},
}

\bib{clutterbuck_2005}{article}{
	author={Clutterbuck, J.}
	author={Schnürer, O.}
	author={Schulze, F.}
	title={Stability of translating solutions to mean curvature flow},
	journal={Calculus of Variations and Partial Differential Equations}
	volume={29}
	date={2007}
	pages={281?293}
    doi={https://doi.org/10.1007/s00526-006-0033-1}
}

\bib{Torres}{article}{
   author={de la Fuente, D.},
   author={Romero, A.},
   author={Torres, P.},
   title={Existence and extendibility of rotationally symmetric graphs with
   a prescribed higher mean curvature function in Euclidean and Minkowski
   spaces},
   journal={J. Math. Anal. Appl.},
   volume={446},
   date={2017},
   number={1},
   pages={1046--1059},
   issn={0022-247X},
   review={\MR{3554770}},
   doi={10.1016/j.jmaa.2016.09.022},
}

\bib{dieter_2005}{article}{
   author={Dieter, S.},
   title={Nonlinear degenerate curvature flows for weakly convex hypersurfaces}, 
   volume={22}, 
   doi={10.1007/s00526-004-0279-4}, 
   number={2}, 
   journal={Calculus of Variations},  
   year={2005},
   pages={229-251}
}

\bib{gilbra_trudinger}{article}{
author = {Gilbarg, D.},
author={Trudinger, N.},
title = {Elliptic partial differential equations of second order},
journal = {Springer-Verlag Berlin ; New York },
date = {1983},

}

\bib{Hoffman_Martin_white_2019}{article}{
	author={Hoffman, D.},
	author={Ilmanen, T.},
	author={Martín, F.},
	author={White, B.}
	title={Notes on translating solitons for Mean Curvature Flow},
	date={2019},
	journal={Preprint arXiv:1901.09101},
}

\bib{huisken-sinestrari_1999}{article}{
   author={Huisken, G.},
   author={Sinestrari, C.},
   title={Mean curvature flow singularities for mean convex surfaces},
   journal={Calc. Var. Partial Differential Equations},
   volume={8},
   date={1999},
   number={1},
   pages={1--14},
   issn={0944-2669},
   review={\MR{1666878}},
   doi={10.1007/s005260050113},
}

\bib{ilmanen_1994}{article}{
   author={Ilmanen, T.},
   title={Elliptic regularization and partial regularity for motion by mean
   curvature},
   journal={Mem. Amer. Math. Soc.},
   volume={108},
   date={1994},
   number={520},
   pages={x+90},
   issn={0065-9266},
   review={\MR{1196160}},
   doi={10.1090/memo/0520},
}

\bib{jlauret}{article}{
	author={Lauret, J.},
	title={Finding Solitons},
	journal={Notices of The American Mathematical Society},
	date={2020},
	
}

\bib{Paco_2014}{article}{
	author={Martín, F.},
	author={Savas-Halilaj, A.}
	author={Smoczyk, K.}
	title={On the Topology of Translating solitons of the Mean Curvature Flow},
	journal={arXiv:1404.6703v1},
	date={2014},
}

\bib{antonio martinez_2019}{article}{
	author={Martínez, A.},
	author={Martínez-Triviño, A.}
	title={Equilibrium of Surfaces in a Vertical Force Field},
	journal={Preprint arXiv:1910.07795},
	date={2019},
}

\bib{moller_2014}{article}{
   author={Møller, N.},
   title={Non-existence for self-translating solitons},
   journal={Pre print 	arXiv:1411.2319},
   date={2014}
   url={https://www.ams.org/journals/notices/202005/rnoti-p647.pdf?fbclid=IwAR3lnMfqMUd6_PeweBuSjTddNiAzn2LqTzI7Fz72XtqjExiDdi6_eZGeifo},
   
}

\bib{reilly_1973}{article}{
   author={Reilly, R.},
   title={Variational properties of functions of the mean curvatures for hypersurfaces in space forms},
   journal={J. Differential Geom.},
   volume={8},
   date={1973},
   number={3},
   pages={465--477},
   doi={doi:10.4310/jdg/1214431802},
}

\bib{Yo}{article}{
	author={Torres-Santaella, J.},
	title={Interior estimates and convexity for Translating solitons of the $Q_k$-flows in $\rr^{n+1}$},
	journal={Pre print arXiv:2003.11112},
	date={2020},
}
\end{biblist}
\end{bibdiv}

\end{document}